\newtheorem{defn}{Definition}[section]
\newtheorem{thm}{Theorem}
\newtheorem{lem}{Lemma}
\newtheorem{coro}{Corollary}
\newtheorem{prop}{Proposition}
\theoremstyle{definition}
\newtheorem*{nota}{Notation}
\title{Coideals of block sequences}
\author{\textsc{Jos\'e G. Mijares}\thanks{jmijares@ivic.gob.ve}\\
Instituto Venezolano\\de Investigaciones Cient\'ificas y
\\Escuela de Matem\'atica\\Universidad Central de Venezuela \and
\textsc{Jes\'us Nieto}\thanks{jnieto@usb.ve}\\ Departamento de Formaci\'on General y Ciencias B\'asicas\\ Universidad Sim\'on Bol\'ivar}
\date{}
\begin{document}

\maketitle

\begin{abstract}
We extend the well known notion of \textit{coideal} on $\mathbb{N}$ to families of block sequences on $FIN_k$ and prove that if a coideal of block sequences is \textit{semiselective} and satisfies a local version of Gowers' theorem \cite{Gow} then the local Ramsey property relative to it can be characterized in terms of the abstract Baire property, and the family of all sets having the local Ramsey property relative to one such coideal is closed under the Suslin operation. We also prove that these coideals satisfy a sort of \emph{canonical partition property} in the sense of Taylor \cite{taylor}, L\'opez-Abad \cite{jordi} and Blass \cite{blass}. This results give us an idea of the conditions to be considered in an abstract study of the local Ramsey property in the context of topological Ramsey spaces (see \cite{todo}).
\end{abstract}

\begin{flushleft}

\textbf{Keywords:} semiselective coideal; Gowers' theorem; local Ramsey property; topological Ramsey space; canonical partition property

\end{flushleft}

\section{Introduction}
Let $\mathbb{N}$ be the set of nonnegative integers. For a
given $A\subseteq\mathbb{N}$, let $A^{[\infty]} =
\{X\subseteq A: |X| = \infty\}$ and $A^{[<\infty]} =
\{X\subseteq A: |X| < \infty\}$. Consider the sets of the form:
$$[a,A] = \{B\in\mathbb{N}^{[\infty]} : a\sqsubset B\subseteq
a\cup A\}$$
where $a\in\mathbb{N}^{[<\infty]}$, $A\in\mathbb{N}^{[\infty]}$ and $a\sqsubset B$ means that $a$ is
an initial segment of $B$. The relativized version of the
completely Ramsey property (see \cite{galpri}) for subsets of
$\mathbb{N}^{[\infty]}$, known as \textit{local Ramsey property},
is the following:

\medskip

For a family $\mathcal{H}\subseteq\mathbb{N}^{[\infty]}$, a set
$\mathcal{X}\subseteq \mathbb{N}^{[\infty]}$ is said to be
$\mathcal{H}$--\emph{Ramsey} if for every $[a,A]$ with
$A\in\mathcal{H}$ there exists $B\in\mathcal{H}$ with
$[a,B]\subseteq[a,A]$ such that $[a,B]\subseteq\mathcal{X}$ or
$[a,B]\cap\mathcal{X}=\emptyset$. $\mathcal{X}$ is said to be
$\mathcal{H}$--\emph{Ramsey null} if for every $[a,A]$ with
$A\in\mathcal{H}$ there exists $B\in\mathcal{H}$ with
$[a,B]\subseteq[a,A]$ such that $[a,B]\cap\mathcal{X}=\emptyset$.

\medskip

In \cite{mathias}, Mathias introduces the \emph{happy families}
(or \textit{selective coideals}) of subsets of $\mathbb{N}$ and study the
local Ramsey property relative to such families. Then he proves
that the analytic subsets of $\mathbb{N}^{[\infty]}$ are
$\mathcal{U}$--Ramsey when $\mathcal{U}$ is a
Ramsey ultrafilter and generalizes this result for arbitrary happy
families. Farah \cite{farah}
improved this results by introducing the notion of \textit{semiselectivity}
and proving that a coideal is semiselective if and only if
the local Ramsey property is equivalent to a version of the abstract
Baire property relative to that coideal.

\medskip

Let $FIN_k$ be the discretization of the positive part of the unit sphere of the Banach space $c_0$ used by Gowers to study a sort of stability for Lipschitz functions (see \cite{Gow}). In this work, the notions of \textit{coideal} on $\mathbb{N}$ and \textit{semiselectivity} are extended to families of block sequences on $FIN_k$ and it is proven that if a coideal of block sequences is semiselective and satisfies a \textit{local version of Gowers' theorem} \cite{Gow} then the results from \cite{farah} can be translated to the context of $FIN_k$. The structure of this work is as follows: in section
\ref{fink} we present the definition of $FIN_k$
and related notions and state some useful known
results. In section \ref{theory}, the notion of \textit{coideal of block sequences} is introduced, \textit{semiselectivity} and the \textit{Gowers property} for coideals of block sequences are analized, and some examples are given. The corresponding \emph{local Ramsey property} is proven
to be equivalent to a version of the abstract Baire property, when relativized to
a semiselective Gowers coideal of block sequences. In section \ref{suslinop}
we prove that, relative to one such coideal, the family of locally Ramsey sets in this context is closed
under the Suslin operation, showing in this way that this family
includes the analytic sets. In section
\ref{cpp}, we show that every semiselective Gowers coideal satisfies a sort of \emph{canonical
partition property}, in the sense of Taylor \cite{taylor}, L\'opez-Abad
\cite{jordi} and Blass \cite{blass}.

\section{Preliminaries}\label{fink}

Fix an integer $k\geq 1$. Given a function $p\colon
\mathbb{N}\to \{0,1,\dots,k\}$, denote $supp(p)=\{n\colon p(n)
\neq 0\}$ and $rang(p)$ the image set of $p$.
Consider the set
$$FIN_k=\{p\colon\mathbb{N}\to \{0,1,\dots,k\}\colon |supp(p)|<\infty
{\mbox { and }} k\in rang(p)\}.$$
We say that $X=(x_n)_{n\in \mathcal{I}}\subseteq FIN_k$, with
$\mathcal{I}\in \mathcal{P}(\mathbb{N})$ is a \textbf{basic block sequence} if
\begin{center}
$n<m\ \Rightarrow $\ max($supp(x_n))<$\ min($supp(x_m))$
\end{center}
The \emph{length} of $X$, denoted by $|X| $, is the cardinality of
$\mathcal{I}$. For infinite basic block sequences (i.e., basic block
sequences of infinite length) we assume that $\mathcal{I}=\mathbb{N}$.
Define $T\colon FIN_k \to FIN_{k-1}$ by
\begin{center}
$T(p)(n)=$max$\{p(n)-1,0\}$
\end{center}
For $j\in\mathbb{N}$, $T^{(j)}$ is the $j$-th iteration of $T$, i.e.,
$T^{(0)}(p) = p$ and $T^{(j+1)}(p) = T(T^{(j)}(p))$.
Given a basic block sequence $A=(a_n)_{n\in\mathcal{I}}$ we
define $[A]\subseteq FIN_k$ as the set whose elements are all the functions of the form
$$a=T^{(j_0)}(a_{n_0})+T^{(j_1)}(a_{n_1})+\cdots+T^{(j_r)}(a_{n_r})$$
with $n_0<n_1<\cdots<n_r\in \mathcal{I}$, $\{j_0,j_1,\cdots,j_r\}
\subseteq\{0,1,\dots,k\}$, and $j_i=0$ for some $i\in\{0,1,\dots,r\}$.
In this case we say that $a$ is obtained from $A$ by the
\textbf{tetris operation}. Denote by $FIN_k^{[\infty]}$ (resp.
$FIN_k^{[<\infty]}$), the set of infinite (resp. finite) basic block
sequences. Also, denote by $FIN_k^{[n]}$ the set of finite basic block
sequences of length $n$. For $a=(a_1,a_2,\dots,a_n)\in FIN_k^{[n]}$,
let $supp(a) = \cup_{j=1}^nsupp(a_j)$. For $A$, $B\in FIN_k^{[\infty]}$,
define
$$A\leq B \Leftrightarrow A\subseteq [B]$$
If $A=(a_1,a_2,\dots\ )\in FIN_k^{[\infty]}$, for every integer $n\geq 1$,
denote $$A\upharpoonright n : =(a_1,a_2,\dots,a_n)\in FIN_k^{[n]}$$
\noindent and $A\upharpoonright  0 : =\emptyset$. We say that
$a\in FIN_k^{[<\infty]}$ is \emph{compatible} with $A$ (or $A$ is
compatible with $a$) if there exists
$B\leq A$ such that $a=B\upharpoonright n$ for some $n$.
In this case we say that $a$ is an \emph{initial segment} of $B$ and
write $a\sqsubset B$. Denote by $[A]^{[<\infty]}$ (resp. $[A]^{[n]}$)
the set of those members of $FIN_k^{[<\infty]}$ (resp. $FIN_k^{[n]}$)
which are compatibles with $A$.

\bigskip

$FIN_k$ is the discretization of the positive part of the unit sphere of the Banach space $c_0$ used by Gowers \cite{Gow} to study a sort of stability for Lipschitz functions. The following is the combinatorial tool used to prove the main result contained in \cite{Gow}. It will play an important role in our study of the  local Ramsey property in the sequel:

\begin{thm}[Gowers \cite{Gow}]\label{gowers}
Given an integer $r>0$ and
$$f\colon FIN_k\to \{0,1,\dots,r-1\}$$
there exists $A\in FIN_k^{[\infty]}$ such that $f$ is constant on $[A]$.
\end{thm}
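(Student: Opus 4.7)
The plan is to follow the Galvin--Glazer-style ultrafilter argument, adapted by Todorcevic to handle the tetris operation; the main ingredient will be a compatible family of idempotent ultrafilters indexed by $j = 1, \dots, k$.

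First I would set up a compact right-topological semigroup structure. For each $j$, let
$$\gamma FIN_j = \{\mathcal{U} \in \beta FIN_j : \{p \in FIN_j : \min supp(p) > n\} \in \mathcal{U} \text{ for every } n \in \mathbb{N}\},$$
equipped with the partial sum $+$ given by $D \in \mathcal{U} + \mathcal{V}$ iff $\{p : \{q : p + q \in D,\ \max supp(p) < \min supp(q)\} \in \mathcal{V}\} \in \mathcal{U}$. Standard calculations show that $(\gamma FIN_j, +)$ is a compact Hausdorff right-topological semigroup, and that the tetris map lifts by pushforward to a continuous surjective semigroup homomorphism $\widehat{T}: \gamma FIN_j \to \gamma FIN_{j-1}$.

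Next, I would recursively construct idempotents $\mathcal{U}_1 \in \gamma FIN_1, \dots, \mathcal{U}_k \in \gamma FIN_k$ with $\widehat{T}(\mathcal{U}_j) = \mathcal{U}_{j-1}$ for $j \geq 2$. Existence of $\mathcal{U}_1$ is the Ellis--Namakura theorem; inductively, $\widehat{T}^{-1}(\mathcal{U}_{j-1})$ is a nonempty closed subsemigroup of $\gamma FIN_j$, so Ellis--Namakura again yields an idempotent $\mathcal{U}_j$ in it. Given the coloring $f$, let $c$ be the unique color with $f^{-1}(c) \in \mathcal{U}_k$. A Galvin--Glazer type induction then extracts $A = (a_n) \in FIN_k^{[\infty]}$ with $[A] \subseteq f^{-1}(c)$: having chosen compatible $a_1 < \dots < a_n$ all of whose tetris-combinations lie in $f^{-1}(c)$, one shows that the set of admissible $a_{n+1}$'s still belongs to $\mathcal{U}_k$.

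The main obstacle is precisely this bookkeeping in the extraction step: the ``admissible'' predicate for $a_{n+1}$ must ensure that $T^{(i)}(a_{n+1})$ combines with every tetris-combination of the previously chosen blocks (allowing arbitrary further tetris iterates) and lands in $f^{-1}(c)$, for every relevant $i$. Verifying that this predicate still defines a set in $\mathcal{U}_k$ requires a careful multi-level application of the idempotency of each $\mathcal{U}_j$ interacting with the compatibility $\widehat{T}(\mathcal{U}_j) = \mathcal{U}_{j-1}$; this multi-level interplay between $+$ and $T$ is the technical heart of the argument and was one of the key innovations of Gowers' proof.
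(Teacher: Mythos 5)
The paper offers no proof of this statement at all --- it is quoted as a black box from Gowers \cite{Gow} --- so the only question is whether your argument stands on its own. Your overall strategy (the compact right-topological semigroups $\gamma FIN_j$, a tower of idempotents linked by the lifted tetris map, and a Galvin--Glazer extraction) is indeed the standard route and is faithful to Gowers' original proof. But there is a genuine gap in your construction of the tower. You obtain $\mathcal{U}_j$ simply as an idempotent of the closed subsemigroup $\widehat{T}^{-1}(\mathcal{U}_{j-1})$. The two properties this gives you --- $\mathcal{U}_j + \mathcal{U}_j = \mathcal{U}_j$ and $\widehat{T}(\mathcal{U}_j) = \mathcal{U}_{j-1}$ --- are not sufficient for the extraction step you describe. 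A partial tetris-combination $p$ of already-chosen blocks lies in $FIN_i$ for various $i \le k$ (depending on the least tetris exponent used in it), and the claim that the set of admissible $a_{n+1}$'s is still in $\mathcal{U}_k$ reduces to assertions of the form: for every $B \in \mathcal{U}_{\max(i,j)}$, the set $\{p \in FIN_i : \{q \in FIN_j : p+q \in B\} \in \mathcal{U}_j\}$ is $\mathcal{U}_i$-large. That is exactly the identity $\mathcal{U}_i + \mathcal{U}_j = \mathcal{U}_{\max(i,j)}$ (together with its mirror image), needed for $i \neq j$ as well as for $i = j$. These mixed identities do not follow from idempotency plus $\widehat{T}$-compatibility: the homomorphism property relates $T(p)+T(q)$ to $T(p+q)$, not $T(p)+q$ to $p+q$, so an arbitrary idempotent in $\widehat{T}^{-1}(\mathcal{U}_{j-1})$ will in general fail $\mathcal{U}_{j-1}+\mathcal{U}_j = \mathcal{U}_j + \mathcal{U}_{j-1} = \mathcal{U}_j$.

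The repair is to build the commutation relations into the recursion: at stage $j$ take the idempotent inside the smaller compact semigroup $\{\mathcal{V} \in \gamma FIN_j : \widehat{T}(\mathcal{V}) = \mathcal{U}_{j-1} \mbox{ and } \mathcal{U}_i + \mathcal{V} = \mathcal{V} + \mathcal{U}_i = \mathcal{V} \mbox{ for all } i < j\}$, whose nonemptiness needs its own argument (e.g.\ start from any $\mathcal{W} \in \widehat{T}^{-1}(\mathcal{U}_{j-1})$ and pass to $\mathcal{U}_{j-1} + \mathcal{W} + \mathcal{U}_{j-1}$, checking this stays in the fiber). Your closing paragraph correctly identifies the extraction as the technical heart, but the tools you have laid out for it are one property short; as written, the induction ``the set of admissible $a_{n+1}$'s still belongs to $\mathcal{U}_k$'' cannot be carried out.
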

\qed

Let $FIN = \mathbb{N}^{[<\infty]}\setminus\{\emptyset\}$. There is an obvious way to identify $FIN$ with $FIN_1$. In this way, for $k=1$ Theorem \ref{gowers} reduces to Hindman's theorem \cite{hindman}.

\medskip

Given $n\in\mathbb{N}$, let $e_n : \mathbb{N} \to \{0, 1, \dots, k\}$
be defined as $e_n(n) = k$ and $e_n(m) = 0$, for every $m\neq n$. It is
clear that $[(e_n)_n] = FIN_k$. So, if $A = (a_n)_n\in FIN_k^{[\infty]}$
then using the canonical isomorphism $\Phi : FIN_k \to [A]$ obtained
by extending the mapping $e_n \mapsto a_n$, the following ``relativized''
version of Gowers' theorem can be proven:

\begin{thm}\label{gowers1}
Given an integer $r>0$, $A\in FIN_k^{[\infty]}$ and
$$f\colon [A]\to \{0,1,\dots,r-1\}$$
 there exists $B\leq A$ such that $f$ is constant on $[B]$.
\end{thm}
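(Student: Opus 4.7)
My plan is to reduce Theorem \ref{gowers1} to the (non-relativized) Gowers Theorem \ref{gowers} via the canonical bijection $\Phi : FIN_k \to [A]$ induced by $e_n \mapsto a_n$, as suggested in the paragraph preceding the statement.

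First I would make $\Phi$ explicit: every $p \in FIN_k$ has a unique representation $p = \sum_{i=0}^r T^{(j_i)}(e_{n_i})$ with $n_0 < n_1 < \cdots < n_r$ enumerating $\operatorname{supp}(p)$, with $j_i = k - p(n_i) \in \{0, 1, \ldots, k\}$, and with $j_{i_0} = 0$ for at least one index $i_0$ (because $k \in \operatorname{rang}(p)$). I then set
\[\Phi(p) := \sum_{i=0}^r T^{(j_i)}(a_{n_i}),\]
which is easily seen to be a bijection between $FIN_k$ and $[A]$.

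Next I would verify two compatibility properties of $\Phi$: (i) for any basic block sequence $C = (c_n)_n \in FIN_k^{[\infty]}$ and any valid tetris-combination $\sum_i T^{(j_i)}(c_{n_i}) \in [C]$,
\[\Phi\bigl(\textstyle\sum_i T^{(j_i)}(c_{n_i})\bigr) = \textstyle\sum_i T^{(j_i)}(\Phi(c_{n_i}));\]
and (ii) the sequence $\Phi(C) := (\Phi(c_n))_n$ is a basic block sequence in $FIN_k^{[\infty]}$ with $\Phi(C) \leq A$. Property (i) follows from the pointwise description of $T$ (namely $T(f)(n) = \max(f(n)-1, 0)$) together with the additivity of $\Phi$ on elements with disjoint and correctly-ordered supports; property (ii) follows from (i) plus the fact that $A$ is itself a basic block sequence (so the supports of its terms are ordered). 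Together, (i) and (ii) yield $\Phi([C]) = [\Phi(C)]$.

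To conclude, given $f : [A] \to \{0, 1, \ldots, r-1\}$, define $g := f \circ \Phi : FIN_k \to \{0, 1, \ldots, r-1\}$ and apply Theorem \ref{gowers} to obtain $C \in FIN_k^{[\infty]}$ with $g$ constant on $[C]$. Setting $B := \Phi(C)$, one has $B \leq A$ and $f$ is constant on $[B] = \Phi([C])$, as required. The only delicate piece of bookkeeping is property (i) above; once the unique representation of elements of $FIN_k$ is in place, every other step is mechanical.
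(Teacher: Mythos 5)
Your proposal is correct and follows exactly the route the paper indicates: the paper states Theorem~\ref{gowers1} without a written proof, merely remarking that it follows from Theorem~\ref{gowers} via the canonical isomorphism $\Phi : FIN_k \to [A]$ extending $e_n \mapsto a_n$, and your argument is a careful elaboration of that same transfer, with the key verifications ($\Phi$ commutes with the tetris operation and $\Phi([C]) = [\Phi(C)]$) correctly identified and checkable from the pointwise formula for $T$.
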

\qed

\medskip

For $a=(a_1,\dots, a_n)$, $b=(b_1,\dots,b_m)\in FIN_k^{[<\infty]}$,
write $a< b$ to mean $max(supp(a))<min(supp(b))$. Notice that if $a< b$
then we can build the ``\emph{concatenation}'' $c=a^{\smallfrown}b=(a_1,
\dots,a_n,b_1,\dots,b_m)\in FIN_k^{[<\infty]}$. Define
$$[A]^{[<\infty]}/a=\{b\in[A]^{[<\infty]}\colon a< b\}$$
$$A/a=\{b\in A\colon a< b\}$$ and for every $n\in\mathbb{N}$,
$$A/n=\{b\in A\colon n < min(supp(b))\}$$
Notice that $A/a, A/n\in FIN_k^{[\infty]}$. Also, define the ``Ellentuck
type'' neighborhood
$$[a,A] : =\{B\in FIN_k^{[\infty]}\colon a\sqsubset B {\mbox{ and }}
B/a\subseteq [A]\}$$
Notice that if $a\in[A]^{[<\infty]}$ then
$$[a,A]=\{B\in FIN_k^{[\infty]}\colon a\sqsubset B {\mbox{ and }}
B\leq A\}$$ Also, let $$[a,A]^{[n]} : = \bigcup \{[B]^{[n]} : B\in
[a,A]\}.$$and $$[a,A]^{[<\infty]} = \bigcup_n[a,A]^{[n]}.$$

\section{Coideals of block sequences}\label{theory}

\begin{defn}\label{defncoideal}
We say that ${\cal H}\subseteq FIN_k^{[\infty]}$ is a
\textbf{\emph{coideal of block sequences}} or a \textbf{\emph{coideal}} on $(FIN_k^{[\infty]},\leq)$ if it satisfies
the following:
\begin{enumerate}
\item If $A\leq B$ and $A\in {\cal H}$ then $B\in {\cal H}$.
\item Given $A\in\mathcal{H}$ and a partition $A=B\cup C$, there exists
$D\in \mathcal{H}$ such that $D\leq B$ or $D\leq C$.
\end{enumerate}
\end{defn}

\begin{nota}
For $\mathcal{S}\subseteq FIN_k^{[\infty]}$ and $A\in FIN_k^{[\infty]}$,
denote
$$\mathcal{S}\!\!\upharpoonright\!\!A : = \{B\in \mathcal{S} : B\leq A\}$$
\end{nota}

$FIN_k^{[\infty]}$ is a trivial example of a coideal of block sequences.
To see another example, consider a coideal $H$ on $\mathbb{N}$
and for every $A\in FIN_k^{[\infty]}$ define
$$\mu(A)=\bigcup_{a\in A}\{n\in\mathbb{N}\colon a(n)=k\}$$
Then
$$\mathcal{H}=\{A\in FIN_k^{[\infty]}\colon \mu(A)\in H\}$$
is a coideal of block sequences.

\medskip

\begin{defn}\label{defgowers}
Given $\mathcal{S}\subseteq FIN_k^{[\infty]}$, we say that $\mathcal{S}$ is \textbf{Gowers} or has the \textbf{Gowers property} if
for every integer $r>0$, $A\in \mathcal{H}$ and $f\colon FIN_k\to
\{0,1,\dots,r-1\}$ there exists $B\in \mathcal{S}\!\!\upharpoonright\!\! A$
such that $f$ is constant on $[B]$.
\end{defn}

To give some examples of coideals of block sequences having the Gowers property we shall use the following
consequence of Gowers' theorem.

\begin{prop}\label{pseudo-frechet}
Let $\mathcal{H}\subseteq FIN_k^{[\infty]}$ be such that
\begin{enumerate}
\item If $A\leq B$ and $A\in \mathcal{H}$ then $B\in \mathcal{H}$.
\item $\forall A\in \mathcal{H}$ $\exists B\leq A$ $([0,B]\subseteq
\mathcal{H})$.
\end{enumerate}
Then $\mathcal{H}$ is a Gowers coideal of block sequences.
\end{prop}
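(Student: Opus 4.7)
The plan is to verify the two defining conditions of Definition \ref{defncoideal} and the Gowers property of Definition \ref{defgowers}. Upward closure under $\leq$ is exactly hypothesis (1), so that half of the coideal axioms is immediate; what remains is the partition condition, together with the Gowers property itself.

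For the Gowers property, I would argue directly from hypothesis (2) and the relativized Gowers theorem. Given $A\in\mathcal{H}$, an integer $r>0$, and a coloring $f:FIN_k\to\{0,1,\ldots,r-1\}$, use (2) to choose $A_0\leq A$ with $[0,A_0]\subseteq\mathcal{H}$, and then apply Theorem \ref{gowers1} to $A_0$ and the restriction of $f$ to $[A_0]$ to obtain $B\leq A_0$ on which $f$ is constant on $[B]$. Since $B\leq A_0$, we have $B\in[0,A_0]\subseteq\mathcal{H}$, and $B\leq A$, so $B\in\mathcal{H}\upharpoonright A$ witnesses the Gowers property for the pair $(A,f)$. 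This step is routine.

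For the partition condition of Definition \ref{defncoideal}, given $A\in\mathcal{H}$ and a partition $A=B\cup C$ into block subsequences, my plan is to invoke (2) once more to obtain $A_0\leq A$ with $[0,A_0]\subseteq\mathcal{H}$, and then to apply Theorem \ref{gowers1} to the three-valued coloring
\[
f(x)=\begin{cases} 0 & \text{if } x\in[B],\\ 1 & \text{if } x\in[C],\\ 2 & \text{otherwise,}\end{cases}
\]
obtaining $D\leq A_0$ on which $f$ is constant on $[D]$. If the constant value is $0$ then $D\leq B$, while if it is $1$ then $D\leq C$; in either case $D\in[0,A_0]\subseteq\mathcal{H}$, which produces the required element of $\mathcal{H}$ below one half of the partition.

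The main obstacle is the residual monochromatic case where $f\equiv 2$ on $[D]$, so every element of $[D]$ genuinely mixes $B$- and $C$-generators in its $[A]$-decomposition. This is the delicate step: ruling it out by an iterated refinement exploiting the uniqueness of the decomposition of each $x\in[A_0]$ as $T^{(j_1)}(a_{n_1})+\cdots+T^{(j_r)}(a_{n_r})$. I would recolor $[D]$ according to whether the first such generator $a_{n_1}$ belongs to $B$ or to $C$, reapply Theorem \ref{gowers1} within $D$, and argue that after finitely many such passes the mixed case is eliminated, returning us to one of the two good cases above.
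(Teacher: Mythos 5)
Your verification of condition (1) and of the Gowers property is correct and is essentially the paper's own argument: choose $A_0\leq A$ with $[0,A_0]\subseteq\mathcal{H}$ by hypothesis (2), apply the relativized Gowers theorem (Theorem \ref{gowers1}) inside $[A_0]$, and observe that the resulting $B$ lies in $[0,A_0]\subseteq\mathcal{H}$. (You are in fact slightly more careful than the paper, which cites Theorem \ref{gowers} where Theorem \ref{gowers1} is meant.)

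The problem is your treatment of condition (2) of Definition \ref{defncoideal}, and specifically the final ``delicate step.'' The iterated refinement you propose cannot succeed, because the mixed case is not eliminable under your reading of the partition. Concretely, take $A=(a_n)_n$, let $B$ consist of the even-indexed and $C$ of the odd-indexed terms, and consider $D=(a_{2n}+a_{2n+1})_n\leq A$. Every element of $[D]$ has support meeting both $\bigcup_{b\in B}supp(b)$ and $\bigcup_{c\in C}supp(c)$ (since some tetris exponent equals $0<k$), so $[D]\cap([B]\cup[C])=\emptyset$; the same is true of $[D']$ for every $D'\leq D$, so no amount of refinement below $D$ escapes the value $2$. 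Worse, your proposed recoloring by ``first generator'' is already constant on $[D]$ (the first surviving generator of any element of $[D]$ is even-indexed), so the iteration makes no progress at all. Indeed, if one reads ``partition $A=B\cup C$'' as a partition of the set of terms of $A$, the conclusion is actually false for some $\mathcal{H}$ satisfying (1) and (2): take $\mathcal{H}=\{A_0\}^{\top}$ with $A_0=(a_{2n}+a_{2n+1})_n$; then $A\in\mathcal{H}$, but any $D\in\mathcal{H}$ contains some $F\leq D$ with $F\subseteq[A_0]$, which is impossible if $D\leq B$ or $D\leq C$ since $[A_0]$ is disjoint from $[B]$ and from $[C]$.

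The resolution, and what the paper intends by its one-line remark that condition (2) ``follows from $\mathcal{H}$ being Gowers,'' is that the partition in Definition \ref{defncoideal}(2) must be understood at the level of the combinatorial space $[A]$ rather than of the generating sequence: a two-colouring of $[A]$ (extended arbitrarily to $FIN_k$) yields, by the Gowers property already established, some $D\in\mathcal{H}\!\upharpoonright\!A$ with $[D]$ monochromatic, and since $[D]\subseteq[A]$ this lands $[D]$ inside one cell of the partition. Under that reading there is no third colour and no mixed case, and the partition condition is literally the $r=2$ instance of the Gowers property. You should replace your three-valued colouring and the iteration by this observation (or at least flag the ambiguity in the definition); as written, the last paragraph of your argument is a genuine gap that cannot be closed.
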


\begin{proof}
Given $r$, $A$ and $f$ as in the definition \ref{defgowers}, consider
$B$ as in part (2) of the hypothesis and apply Theorem \ref{gowers}
to $r$, $B$ and $f$ to obtain $B'\leq B$ such that $f$ is constant
on $[B']$. Since $B'\in[0,B]\subseteq\mathcal{H}$, we have that
$\mathcal{H}$ is Gowers. It is clear that part (2) of the definition
of coideal follows from $\mathcal{H}$ being Gowers. This concludes the
proof.
\end{proof}

Now we give some examples of Gowers coideals. Fix a nonempty
$\mathcal{A}\subseteq FIN_k^{[\infty]}$ and define
$$\mathcal{A}^{\top}=\{B\in FIN_k^{[\infty]}\colon \exists A\in\mathcal{A}
\ \exists C\leq B(C\leq A)\}$$
It is clear that $\mathcal{A}^{\top}$ satisfies (1) and (2) of
proposition \ref{pseudo-frechet}, hence $\mathcal{A}^{\top}$ is a Gowers coideal of block sequences, for every choice of $\mathcal{A}$. Another feature
of these examples is a property which is analogous to \emph{semiselectivity} for coideals on $\mathbb{N}$.

\medskip

A set $\mathcal{D}\subseteq FIN_k^{[\infty]}$ is
\textbf{dense open} if it satisfies:
\begin{itemize}
\item[I)] If $B\in\mathcal{D}$ and $A\leq B$ then $A\in\mathcal{D}$.
\item[II)] $\forall A\in FIN_k^{[\infty]}$ $\exists B\in\mathcal{D}$
$(B\leq A)$.
\end{itemize}

\medskip

Also, if $(A_n)_{n\geq1}$ is a decreasing sequence in $(FIN_k^{[\infty]},\leq)$, we say that $B\in FIN_k^{[\infty]}$ is a {\bf diagonalization} of $(A_n)_{n\geq 1}$ if $B/b\leq A_n$, for every $b\in[B]$ with $n=max(supp(b))$. Notice that for such $B$ we have $[b,B]\subseteq[b,A_n]$, for every
$b\in[B]^{[<\infty]}$ with $n=max(supp(b))$.

\begin{defn}
We say that ${\cal H}\subseteq FIN_k^{[\infty]}$ is \textbf{semiselective}
if given $A\in \mathcal{H}$ and a sequence $(\mathcal{D}_n)_n\subseteq \mathcal{H}$ of dense open sets, there exists a decreasing
$(A_n)_n$ with $A_n\in\mathcal{D}_n$ for all $n$ and $B\in
\mathcal{H}\!\!\upharpoonright\!\! A$ such that $B$ diagonalizes $(A_n)_n$.
Also, we say that $B$ is a diagonalization of the sequence $(\mathcal{D}_n)_n$.
(In other words, the set of diagonalizations of $(\mathcal{D}_n)_n$
is \emph{dense} in $(\mathcal{H},\leq)$).
\end{defn}

Let us see that every $\mathcal{A}^{\top}$ as defined above
is semiselective: Let $(\mathcal{D}_n)_n\subseteq \mathcal{A}^{\top}$ be a
sequence of dense open sets. Given $A\in \mathcal{A}^{\top}$,
fix $B\leq A$ such that $[0,B]\subseteq \mathcal{A}^{\top}$. Using the density of each $\mathcal{D}_n$, it is easy to choose a decreasing sequence $(A_n)_n$ with $A_n\in\mathcal{D}_n$ with $A_0\leq B$. If we pick $c_n\in [A_n]$ with $c_n < c_{n+1}$ then $C = \{c_0, c_1, \dots\}\leq B$ and diagonalizes $(\mathcal{D}_n)_n$. But $C\in\mathcal{A}^{\top}$, so we are done.

\medskip

We have given a \emph{scheme} of examples of semiselective Gowers coideals.
In next section we shall see that this type of coideals is very
convenient for our study.

\section{The Ramsey property}\label{ramsey-prop}

For the next two definitions, let ${\cal H}$ be a coideal on $(FIN_k^{[\infty]},\leq)$.

\begin{defn} $\mathcal{X}\subseteq FIN_k^{[\infty]}$ is
$\mathcal{H}$--\textbf{Ramsey}
if given $A\in \mathcal{H}$ and $a\in FIN_k^{[<\infty]}$
 there exists $B\in [a,A]\cap {\cal H}$ such that $[a,B]\subseteq \mathcal{X}$ or $[a,B]\cap\mathcal{X}=
\emptyset$. If for every $A\in \mathcal{H}$ and $a\in FIN_k^{[<\infty]}$
there exists $B\in [a,A]\cap {\cal H}$ such that $[a,B]\cap\mathcal{X}=\emptyset$, we say that
$\mathcal{X}$ is $\mathcal{H}$--\textbf{Ramsey null}.
\end{defn}

\begin{defn} $\mathcal{X}\subseteq FIN_k^{[\infty]}$ is
$\mathcal{H}$--\textbf{Baire} if given $A\in \mathcal{H}$
and $a\in FIN_k^{[<\infty]}$ there exists $[b,B]\subseteq[a,A]$,
with $B\in{\cal H}$, such that
$[b,B]\subseteq \mathcal{X}$ or $[b,B]\cap\mathcal{X}=\emptyset$.
If for every $A\in \mathcal{H}$ and $a\in FIN_k^{[<\infty]}$
there exists $[b,B]\subseteq[a,A]$ with $B\in \mathcal{H}$, such that
$[b,B]\cap\mathcal{X}=\emptyset$, we say that $\mathcal{X}$ is
$\mathcal{H}$--\textbf{nowhere dense}.
\end{defn}

The main result of this work is the following.

\begin{thm}\label{baire-ramsey}
If $\mathcal{H}$ is a semiselective Gowers coideal of block sequences then, for every $\mathcal{X}\subseteq FIN_k^{[\infty]}$ :
\begin{enumerate}
\item $\mathcal{X}$ is $\mathcal{H}$--Ramsey
iff $\mathcal{X}$ is ${\cal H}$--Baire.
\item $\mathcal{X}$ is $\mathcal{H}$--Ramsey null
iff $\mathcal{X}$ is $\mathcal{H}$-- nowhere dense.
\end{enumerate}

\end{thm}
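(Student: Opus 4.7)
The forward implications in (1) and (2) are immediate: taking $b=a$ in the Baire/nowhere-dense clause recovers the corresponding Ramsey/Ramsey-null clause. Part (2) reduces to part (1): nowhere-density entails Baire, hence by (1) Ramsey, and the ``accept'' alternative $[a,B]\subseteq\mathcal{X}$ is ruled out by nowhere-density applied inside $[a,B]$, leaving only $[a,B]\cap\mathcal{X}=\emptyset$. So it suffices to prove $\mathcal{H}$-Baire $\Rightarrow$ $\mathcal{H}$-Ramsey.

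Following the Galvin--Prikry--Mathias tradition, say that $B\in\mathcal{H}$ \emph{accepts} $b$ if $[b,B]\subseteq\mathcal{X}$, \emph{rejects} $b$ if $[b,B]\cap\mathcal{X}=\emptyset$, and \emph{decides} $b$ if one of these holds. The $\mathcal{H}$-Baire hypothesis reads: for every $[a,C]$ with $C\in\mathcal{H}$, there exist $b$ with $a\sqsubset b$ (or $b=a$) and $C'\in[a,C]\cap\mathcal{H}$ such that $C'$ decides $b$. Fix $a\in FIN_k^{[<\infty]}$ and $A\in\mathcal{H}$; the target is to produce $B\in[a,A]\cap\mathcal{H}$ deciding $a$ itself. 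The plan routes through a \emph{fusion lemma}: there is $B\in[a,A]\cap\mathcal{H}$ deciding every $b\in[B]^{[<\infty]}$ that extends $a$. For each such finite $b$ set
\[\mathcal{D}_b=\{C\in\mathcal{H}\upharpoonright A : b\notin[C]^{[<\infty]} \text{ or } C \text{ decides } b\}.\]
Each $\mathcal{D}_b$ is downward closed. Density in $(\mathcal{H}\upharpoonright A,\leq)$ is produced by combinatorial forcing in the Galvin--Prikry style: the Baire hypothesis supplies decided extensions of $b$ at higher levels, and the Gowers property is used to $2$-color the possible one-block extensions of an intermediate $c$ (identified with $FIN_k$ via the canonical isomorphism with $[C/c]$) by their accept/reject status, yielding a monochromatic sub-block in $\mathcal{H}$; the decision is then pulled down from $c^{\smallfrown}(x)$ to $c$ using the observation that uniform acceptance (resp.\ rejection) of every one-block extension by a common $C''\in\mathcal{H}$ implies acceptance (resp.\ rejection) of $c$ by $C''$. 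Semiselectivity applied to the countable family $(\mathcal{D}_b)$ then yields the fusion $B$.

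With the lemma in hand, $2$-color each $x\in[B/a]$ (identified with $FIN_k$) by whether $B$ accepts or rejects $a^{\smallfrown}(x)$, and use the Gowers property of $\mathcal{H}$ applied to $B/a\in\mathcal{H}$ (which belongs to $\mathcal{H}$ by the partition property applied to the split of $B$ at $a$) to obtain $B'\in\mathcal{H}$ with $B'\leq B/a$ on which the color is constant. The pull-down observation then yields $[a,a^{\smallfrown}B']\subseteq\mathcal{X}$ or $[a,a^{\smallfrown}B']\cap\mathcal{X}=\emptyset$ according to the color, witnessing the Ramsey alternative at $[a,A]$, with $a^{\smallfrown}B'\in\mathcal{H}$ following by upward closure from $B'\leq a^{\smallfrown}B'$ and $B'\in\mathcal{H}$. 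The main obstacle is the density verification of $\mathcal{D}_b$: the Baire hypothesis only delivers a decided $b'$ somewhere above $b$, and compressing the continuum of intermediate one-block extensions back to a single coideal member while preserving decisions is precisely where the Gowers property of $\mathcal{H}$ is indispensable; the countable tree of extensions of $a$ is then handled by the semiselective diagonalization. Interlocking these two ingredients cleanly is the technical heart of the argument.
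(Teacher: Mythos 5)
Your forward implications and the reduction of (2) to (1) are fine, and the ``pull-down'' observation (if a single $C''\in\mathcal{H}$ accepts, resp.\ rejects, every one-block extension of $c$, then $C''$ accepts, resp.\ rejects, $c$, since $[c,C'']=\bigcup_x[c^{\smallfrown}(x),C'']$) is correct. But the gap you flag at the end is real and is not merely technical: with your \emph{strong} notion of rejection ($[b,B]\cap\mathcal{X}=\emptyset$), the density of $\mathcal{D}_b$ \emph{is} the statement ``$\mathcal{X}$ is $\mathcal{H}$--Ramsey below $b$,'' i.e.\ the theorem itself localized at $b$. Your proposed verification is circular: to $2$-color the one-block extensions of $c$ by their accept/reject status you must already know that each such extension is decided by a common member of $\mathcal{H}$, which is the same problem one level deeper, and the recursion never bottoms out (there is no well-founded induction on finite extensions from above). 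The Baire hypothesis only hands you one decided pair $[b',C']$ somewhere above $b$, with no control over which one-block extension $b'$ passes through, so it cannot seed the coloring.

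The repair --- and the paper's route --- is to run the combinatorial forcing with the \emph{weak}, Nash--Williams-style rejection relative to a family $\mathcal{F}\subseteq FIN_k^{[<\infty]}$: $B$ accepts $a$ if every $B'\in[a,B]$ has an initial segment in $\mathcal{F}$, and $B$ rejects $a$ if \emph{no} member of $[a,B]\cap\mathcal{H}$ accepts $a$. With this asymmetric definition every $a$ is decided by some refinement for free, so the relevant sets really are dense; semiselectivity fuses them, and the Gowers property upgrades rejection of $a$ to rejection of all its one-block extensions (Lemma \ref{features}). This yields the semiselective Galvin lemma (Theorems \ref{galvinlocal} and \ref{galvinlocal2}), and Baire $\Rightarrow$ Ramsey then follows by applying that lemma successively to $\mathcal{F}_0=\{b\colon a\sqsubseteq b,\ [b,A]\subseteq\mathcal{X}\}$ and $\mathcal{F}_1=\{b\colon a\sqsubseteq b,\ [b,B_0]\cap\mathcal{X}=\emptyset\}$, invoking $\mathcal{H}$--Baireness only at the very end to rule out the alternative in which both families miss $[B_2]^{[<\infty]}$. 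Your outline cannot be completed without switching to this (or an equivalent) accept/reject scheme.
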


One of the consequence of Theorem \ref{baire-ramsey} is that analytic subsets of $FIN_k^{[\infty]}$ are $\mathcal{H}$--Ramsey, for every semiselective Gowers coideal of block sequences $\mathcal{H}$. Here we are viewing $FIN_k^{[\infty]}$ as a subspace of $FIN_k^{\mathbb{N}}$ with the (metric) product topology, regarding $FIN_k$ as a discrete space; we will see more about this in Section \ref{suslinop} (see Theorem \ref{ana-fink} below). In Section \ref{cpp} we will use this fact to show that semiselective Gowers coideals satisfy a canonical partition property (see Theorem \ref{semisel-cpp}) similar to the one satisfied by \textit{stable ordered-union ultrafilters} in \cite{blass} and related to the generalization of Taylor's theorem \cite{taylor} due to L\'opez-Abad \cite{jordi}.

\medskip

Before showing our proof of Theorem \ref{baire-ramsey}, we will need to prove a version of the semiselective Galvin's lemma (see \cite{farah} and \cite{galvin}), for the context of $FIN_k$.

\medskip

Let us consider the following \emph{combinatorial forcing}. Fix a coideal $\mathcal{H}$ and ${\cal F}\subseteq FIN_k^{[<\infty]}$. We say that
$B\in {\cal H}$ \textbf{accepts} $a\in
FIN_k^{[<\infty]}$ if for every $B'\in [a,B]$ there exists $b\in\mathcal{F}$
such that $b\sqsubset B'$. $B$ \textbf{rejects} $a$ if no member
of $[a,B]\cap{\cal H}$ accepts $a$; and $B$ \textbf{decides} $a$
if $B$ accepts or rejects $a$. This combinatorial forcing has the
following features:

\begin{lem}\label{features}
If $\mathcal{H}$ is Gowers then:
\begin{enumerate}
\item If $B$ accepts (rejects) $a$, then every $B'\in \mathcal{H}
\!\!\upharpoonright \!\!B$ accepts (rejects) $a$.
\item Given $B\in \mathcal{H}$ and $a\in
FIN_k^{[<\infty]}$ there exists $B'\in \mathcal{H}\!\!\upharpoonright\!\! B$
which decides $a$.
\item If $B$ accepts $a$ then $B$ accepts every $b\in [a,B]^{[|a|+1]}$.
\item If $B$ rejects $a$ then there exists $B'\in [a,B]\cap\mathcal{H}$
such that $B$ does not accept any $b\in [a,B']^{[|a|+1]}$.
\end{enumerate}

\end{lem}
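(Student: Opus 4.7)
My plan is to treat parts (1), (2), and (3) as straightforward consequences of the definitions and the monotonicity of the Ellentuck-type neighborhoods, and to reserve the combinatorial substance for part (4), where the Gowers property enters essentially.

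For (1), the key observation I would exploit is that whenever $B'\in\mathcal{H}\upharpoonright B$ we have $[a,B']\subseteq[a,B]$, since $B'/a\subseteq[B']\subseteq[B]$. The ``accepts'' half is then immediate from this inclusion, and the ``rejects'' half follows by contraposition: any $B''\in[a,B']\cap\mathcal{H}$ that accepted $a$ would also lie in $[a,B]\cap\mathcal{H}$, contradicting $B$'s rejection of $a$. Part (3) uses the analogous inclusion $[b,B]\subseteq[a,B]$ whenever $a\sqsubset b$, so acceptance of $a$ by $B$ transfers immediately to $b$. For (2) I would argue by dichotomy: either $B$ itself rejects $a$ and we take $B'=B$, or, unfolding the negation of rejection, there is $B'\in[a,B]\cap\mathcal{H}$ that accepts $a$; in the nontrivial case $a$ is compatible with $B$, so $B'\leq B$ as required.

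Part (4) is the principal obstacle, and where I expect most of the work to lie. Assuming $B$ rejects $a$, I would introduce the two-coloring $f\colon FIN_k\to\{0,1\}$ defined by $f(x)=0$ if $x>a$ and $B$ accepts $a^{\smallfrown}\langle x\rangle$, and $f(x)=1$ otherwise. In order to apply Gowers' property to the tail of $B$ past $a$, I would first use the coideal partition property on the decomposition of $B$ into its finite initial piece and the tail $B/a$ to extract $D\in\mathcal{H}$ with $D\leq B/a$, since the finite initial piece supports no infinite member of $\mathcal{H}$. Then the Gowers property of $\mathcal{H}$, applied to $D$ and $f$, yields $B''\in\mathcal{H}\upharpoonright D$ on which $f$ is constant. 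Setting $B'=a\cup B''$, I would verify $B'\in[a,B]$ (since $B'/a=B''\leq B/a\leq B$) and $B'\in\mathcal{H}$ (since $B''\leq B'$ and $B''\in\mathcal{H}$, by upward closure).

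The closing step is to exclude the constant color $0$. If $f\equiv 0$ on $[B'']$, then for every $C\in[a,B']$ its $(|a|+1)$-th block $x$ lies in $[B'']$, so $B$ accepts $a^{\smallfrown}\langle x\rangle$; since $C\in[a^{\smallfrown}\langle x\rangle,B]$, this produces some $c\in\mathcal{F}$ with $c\sqsubset C$, showing that $B'\in[a,B]\cap\mathcal{H}$ accepts $a$ and contradicting the assumption via part (1). Hence $f\equiv 1$ on $[B'']$, which is exactly the assertion that $B$ does not accept any $b\in[a,B']^{[|a|+1]}$. The delicate point throughout is keeping track of simultaneous membership in $\mathcal{H}$ and in the relevant Ellentuck neighborhoods while passing between $B$, $B/a$, $D$, $B''$, and $B'=a\cup B''$; this is the precise juncture where the partition property, upward closure, and Gowers' property must be combined with care.
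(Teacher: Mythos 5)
Your proof is correct and follows essentially the same route as the paper: parts (1)--(3) from the definitions, and part (4) by two-coloring the one-block extensions of $a$ according to whether $B$ accepts them, applying the Gowers property, and ruling out the ``accepts'' color via the decomposition $[a,B']=\bigcup_{b\in[a,B']^{[|a|+1]}}[b,B']$. If anything, your write-up is slightly more careful than the paper's, which colors $FIN_k^{[|a|+1]}$ directly and leaves implicit both the reduction to a coloring of $FIN_k$ (the form in which the Gowers property is actually stated) and the passage through the tail $B/a$ and back to $B'=a^{\smallfrown}B''$.
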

\begin{proof}
1--3 follow from the definitions. To prove 4, let
$$\mathcal{O}=\{b\in FIN_k^{[|a|+1]}\colon B {\mbox { accepts }} b\}$$
Notice that, since $\mathcal{H}$ is Gowers
there exists $B'\in[a,B]\cap {\cal H}$ such that
$$[a,B']^{[|a|+1]}\subseteq {\cal O} \mbox{ or }
[a,B']^{[|a|+1]}\subseteq {\cal O}^{\,c}$$
Suppose that
$[a,B']^{[|a|+1]}\subseteq {\cal O}$. Since
$$[a,B']=\bigcup_{b\in [a,B']^{[|a|+1]}}[b,B']$$
we have that $B'$ accepts $a$, which contradicts that $B$
rejects $a$. Hence, $[a,B']^{[|a|+1]}\subseteq {\cal O}^{\,c}$ and therefore, $B$ does not accept any $b\in [a,B']^{[|a|+1]}$.
\end{proof}

\begin{lem}\label{decides}
If $\mathcal{H}$ is semiselective and Gowers then for every $A\in\mathcal{H}$
there exists $B\in {\cal H} \!\!\upharpoonright \!\!A$ which decides
every $b\in [B]^{[<\infty]}$.
\end{lem}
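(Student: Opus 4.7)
The plan is to obtain $B$ via a single application of semiselectivity to a carefully chosen sequence of dense open subsets of $\mathcal{H}$, after which the diagonalization property transfers the decisions made by the $A_n$'s directly to $B$ itself.

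For each $n \in \mathbb{N}$, set
$$\mathcal{D}_n = \{C \in \mathcal{H}\!\!\upharpoonright\!\! A : C \text{ decides every } b \in FIN_k^{[<\infty]} \text{ with } \max(supp(b)) \leq n\}.$$
The key observation is that the set of $b \in FIN_k^{[<\infty]}$ with $\max(supp(b)) \leq n$ is \emph{finite}, because every entry of such a $b$ is a function $\{0,\dots,n\} \to \{0,1,\dots,k\}$ and the entries have pairwise disjoint supports. Hence, starting from any $C \in \mathcal{H}\!\!\upharpoonright\!\! A$, one can iterate Lemma \ref{features}(2) finitely many times to produce $C' \in \mathcal{H}$, $C' \leq C$, that decides every such $b$; this shows $\mathcal{D}_n$ is dense in $\mathcal{H}\!\!\upharpoonright\!\! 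A$. Openness (closure downward within $\mathcal{H}$) is immediate from Lemma \ref{features}(1).

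Now I apply semiselectivity of $\mathcal{H}$ to $A$ and the sequence $(\mathcal{D}_n)_n$ to obtain a decreasing $(A_n)_n$ with $A_n \in \mathcal{D}_n$ and $B \in \mathcal{H}\!\!\upharpoonright\!\! A$ diagonalizing $(A_n)_n$. I claim this $B$ is as required. Given $b \in [B]^{[<\infty]}$ with $n = \max(supp(b))$, the diagonalization property gives $B/b \leq A_n$, so $[b,B] \subseteq [b, A_n]$. Since $A_n \in \mathcal{D}_n$, $A_n$ decides $b$. If $A_n$ accepts $b$, then every $B' \in [b,B] \subseteq [b,A_n]$ has some $c \in \mathcal{F}$ as an initial segment, so $B$ accepts $b$. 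If $A_n$ rejects $b$, then no member of $[b, A_n] \cap \mathcal{H}$ accepts $b$, and since $[b,B] \cap \mathcal{H} \subseteq [b,A_n] \cap \mathcal{H}$, the same holds for $[b,B] \cap \mathcal{H}$, so $B$ rejects $b$.

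The main obstacle I expect is verifying that $\mathcal{D}_n$ is dense open in the relative sense required by the semiselective definition: the finiteness of $\{b : \max(supp(b)) \leq n\}$ is what enables density via iterated application of Lemma \ref{features}(2), and it is exactly this indexing choice that makes the diagonalization condition $B/b \leq A_n$ at $n = \max(supp(b))$ match the decision behaviour we need. Everything else is bookkeeping with the definitions of acceptance and rejection.
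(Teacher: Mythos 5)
Your proposal is correct and follows essentially the same route as the paper: both define, for each $n$, the dense open set of conditions deciding every $b$ with $\max(supp(b))\leq n$ (using that there are only finitely many such $b$, so Lemma \ref{features}(1)--(2) give density and openness), apply semiselectivity to diagonalize, and transfer the decision from $A_n$ to $B$ via $[b,B]\subseteq[b,A_n]$. Your write-up is merely a little more explicit than the paper's about the finiteness argument and about checking that both acceptance and rejection pass from $A_n$ to $B$.
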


\begin{proof}
Let $A\in\mathcal{H}$ be given. For every $a\in FIN_k^{[<\infty]}$ define
$${\cal D}_a=\{C\in\mathcal{H}\colon C \mbox{ decides }a\}$$
By lemma \ref{features}, ${\cal D}_a$ is dense open in $(\mathcal{H},\leq)$. For every $n\in\mathbb{N}$, let $$\mathcal{D}_n=
\cap\{\mathcal{D}_a\colon max(supp(a))\leq n\}.$$ Then every $\mathcal{D}_n$
is also dense open in $(\mathcal{H},\leq)$. Let $B\in \mathcal{H}\!\!\upharpoonright\!\!A$ be a diagonalization of
$(\mathcal{D}_n)_n$. Then, for every $b\in[B]^{[<\infty]}$ with
$max(supp(b))=n$ there exists $D\in \mathcal{D}_n$ such that $[b,B]
\subseteq[b,D]$. Thus, $B$ decides $b$.
\end{proof}

\begin{thm}[Semiselective Galvin's lemma for block sequences]\label{galvinlocal}
Given a semiselective Gowers coideal $\mathcal{H}$, $A\in{\cal H}$
and ${\cal F}\subseteq FIN_k^{[<\infty]}$, there exists $B\in {\cal H}\!\!
\upharpoonright\!\!A$ such that:
\begin{enumerate}
\item $[B]^{[<\infty]}\cap{\cal F}=\emptyset$, or
\item $\forall C\in [\emptyset,B]$ $(\exists \ a\in{\cal F})$
$(a\sqsubset C)$.
\end{enumerate}
\end{thm}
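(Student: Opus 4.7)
The plan is a combinatorial-forcing argument in the spirit of Galvin, Mathias and Farah, adapted to block sequences via the Gowers property of $\mathcal{H}$. I would first apply Lemma \ref{decides} to produce $B_0\in\mathcal{H}\!\!\upharpoonright\!\!A$ deciding every $b\in[B_0]^{[<\infty]}$, and split on $\emptyset$. If $B_0$ accepts $\emptyset$, then by definition every $C\in[\emptyset,B_0]$ has an initial segment in $\mathcal{F}$, so $B=B_0$ yields conclusion (2). Otherwise $B_0$ rejects $\emptyset$, and the task is to find $B\in\mathcal{H}\!\!\upharpoonright\!\!B_0$ such that $B_0$ rejects every $b\in[B]^{[<\infty]}$; this suffices for conclusion (1), since any $b\in\mathcal{F}\cap[B]^{[<\infty]}$ would be accepted by $B$ with $b$ itself as witness, contradicting rejection.

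For each $a\in[B_0]^{[<\infty]}$ rejected by $B_0$ I would introduce
$$\mathcal{U}_a := \{C\in\mathcal{H}\!\!\upharpoonright\!\!B_0 : \text{for every } x\in[C] \text{ with } x>a,\ B_0 \text{ rejects } a^{\smallfrown}\langle x\rangle\},$$
and check that $\mathcal{U}_a$ is dense open in $(\mathcal{H}\!\!\upharpoonright\!\!B_0,\leq)$. Downward closedness is clear from $[C']\subseteq[C]$ when $C'\leq C$. For density, given $D\in\mathcal{H}\!\!\upharpoonright\!\!B_0$, I would apply the Gowers property (Definition \ref{defgowers}) to the $2$-colouring $f$ defined by $f(x)=1$ iff $x>a$ and $B_0$ accepts $a^{\smallfrown}\langle x\rangle$, and $f(x)=0$ otherwise, obtaining $C\in\mathcal{H}\!\!\upharpoonright\!\!D$ on which $f$ is constant. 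The value $1$ must be ruled out: it forces every element of $C$ to lie above $a$, so that $C':=a^{\smallfrown}C$ is a valid block sequence with $C\leq C'\leq B_0$; the upward closure of $\mathcal{H}$ places $C'$ in $[a,B_0]\cap\mathcal{H}$, and unwinding the definition shows that $C'$ accepts $a$ (since every $B'\in[a,C']$ satisfies $B'/a\leq C$ and its $(|a|+1)$-st block $x$ lies in $[C]$ with $x>a$, whence $B_0$ accepts $a^{\smallfrown}\langle x\rangle$ and $B'$ inherits a segment from $\mathcal{F}$), contradicting $B_0$'s rejection of $a$. Hence $f\equiv 0$ on $[C]$, giving $C\in\mathcal{U}_a$.

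Setting $\mathcal{U}_n := \bigcap\{\mathcal{U}_a : max(supp(a))\leq n,\ B_0 \text{ rejects } a\}$, a finite intersection of dense open sets and hence itself dense open, semiselectivity of $\mathcal{H}$ furnishes $B\in\mathcal{H}\!\!\upharpoonright\!\!B_0$ diagonalizing $(\mathcal{U}_n)_n$ through a decreasing $(A_n)_n$ with $A_n\in\mathcal{U}_n$. I would finish by induction on $|a|$ that $B_0$ rejects every $a\in[B]^{[<\infty]}$: the base $a=\emptyset$ is the standing assumption, and for $b=a^{\smallfrown}\langle x\rangle\in[B]^{[|a|+1]}$ the inductive hypothesis places $A_m$ in $\mathcal{U}_a$ for $m=max(supp(a))$, while the diagonalization inclusion $B/a\leq A_m$ puts $x\in[B/a]\subseteq[A_m]$ with $x>a$, so the defining clause of $\mathcal{U}_a$ forces $B_0$ to reject $b$. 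The main obstacle is the density of $\mathcal{U}_a$ when $a\not\leq D$: Lemma \ref{features}(4) alone does not produce a witness $C\leq D$ in this case, which is why the Gowers $2$-colouring together with the upward-extension trick $C\mapsto a^{\smallfrown}C$ (legal precisely because of the coideal's upward closure) is required.
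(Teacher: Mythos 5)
Your proof is correct and follows essentially the same route as the paper: obtain a deciding $B_0$ from Lemma \ref{decides}, split on whether $B_0$ accepts $\emptyset$, and in the rejecting case use semiselectivity to diagonalize the dense open families of conditions that hereditarily reject one-step extensions, then induct on $|a|$. The only real difference is that your explicit Gowers $2$-colouring argument for the density of $\mathcal{U}_a$, including the extension trick $C\mapsto a^{\smallfrown}C$, carefully fills in a step the paper leaves implicit when it simply asserts that its sets $\mathcal{D}_a$ are dense open.
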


\begin{proof}
Fix a semiselective Gowers coideal $\mathcal{H}$, $A\in{\cal H}$
and ${\cal F}\subseteq FIN_k^{[<\infty]}$. Consider the combinatorial forcing defined above and let $B\in {\cal H} \!\!\upharpoonright \!\!A$ be as in lemma \ref{decides}. If $B$ accepts $\emptyset$ then Part 2 of the theorem holds, so suppose that $B$ rejects $\emptyset$.
For every  $a\in [B]^{[<\infty]}$ define
$${\cal D}_a=\{C\in{\cal H}\!\!\upharpoonright\!\!B\colon C
{\mbox{ rejects every }}b\in r_{|a|+1}([a,C])\},$$if $B$ rejects $a$;
and $\mathcal{D}_a={\cal H}\!\!\upharpoonright\!\!B$, otherwise.
Then every $\mathcal{D}_a$ is dense open in $(\mathcal{H}\!\!\upharpoonright\!\!B,\leq)$. Let $B_1$
be a diagonalization of $(\mathcal{D}_a)_a$ in
$\mathcal{H}\!\!\upharpoonright\!\!B$. Let us see that $B_1$ rejects every
$a\in [B_1]^{[<\infty]}$:

\medskip

In fact, $B_1$ rejects $\emptyset$
since $B$ rejects $\emptyset$. Assume that $B_1$ rejects every
$a\in [B_1]^{[n]}$ and consider $b\in [B_1]^{[n+1]}$. Then
$b\in [a,B_1]^{[n+1]}$ for some $a\in [B_1]^{[n]}$. Let $D\in
\mathcal{D}_a$ be such that $B_1/a\leq D$. Since
$[b,B_1]\subseteq[b,D]$ and $D$ rejects $b$ ($\mathcal{D}_a\neq
{\cal H}\!\!\upharpoonright\!\!B$ because $B_1$ rejects $a$), we have
that $B_1$ rejects $b$. Thus, no member of $[B_1]^{[<\infty]}$ is in
$\mathcal{F}$ and so Part 1 of the theorem holds.
\end{proof}

\begin{coro}[Galvn's lemma for block sequences]
Given ${\cal F}\subseteq FIN_k^{[<\infty]}$ and $A\in FIN_k^{[\infty]}$, there exists $B\leq A$ such that:
\begin{enumerate}
\item $[B]^{[<\infty]}\cap{\cal F}=\emptyset$, or
\item $\forall C\in [\emptyset,B]$ $(\exists \ a\in{\cal F})$
$(a\sqsubset C)$.
\end{enumerate}
\end{coro}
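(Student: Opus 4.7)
The plan is to derive the corollary by specializing Theorem \ref{galvinlocal} to the trivial coideal $\mathcal{H} = FIN_k^{[\infty]}$. With this choice, $\mathcal{H}\!\!\upharpoonright\!\!A = \{B \in FIN_k^{[\infty]} : B \leq A\}$ and $[\emptyset, B] = \{C \in FIN_k^{[\infty]} : C \leq B\}$, so the two alternatives in the theorem's conclusion become verbatim the two alternatives in the corollary's conclusion. The entire task thus reduces to verifying that $FIN_k^{[\infty]}$ is itself a semiselective Gowers coideal of block sequences.

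The coideal axioms from Definition \ref{defncoideal} are immediate: upward closure under $\leq$ holds vacuously, and given a partition $A = B \cup C$, at least one of the two pieces is infinite and, being a sub-sequence of the basic block sequence $A$, belongs to $FIN_k^{[\infty]}$ and is $\leq$ itself. The Gowers property is a direct consequence of Theorem \ref{gowers1}; alternatively, since $[\emptyset, A] \subseteq FIN_k^{[\infty]}$ trivially for every $A$, the hypotheses of Proposition \ref{pseudo-frechet} are met.

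The only step requiring real care is semiselectivity. Given $A \in FIN_k^{[\infty]}$ and a sequence $(\mathcal{D}_n)_n$ of dense open subsets of $FIN_k^{[\infty]}$, I would construct $B = (b_k)_k \leq A$ and a decreasing chain $(A_n)_n$ with $A_n \in \mathcal{D}_n$ by a standard fusion: choose $A_0 \leq A$ in $\mathcal{D}_0$, pick $b_0 \in A_0$ with $m_0 = max(supp(b_0))$, and inductively, having defined $b_0 < \cdots < b_{k-1}$ and $A_0 \geq \cdots \geq A_{m_{k-1}}$, extend the chain one index at a time through $\mathcal{D}_{m_{k-1}+1}, \mathcal{D}_{m_{k-1}+2}, \ldots$ by density while staying below $A_{m_{k-1}}/b_{k-1}$, then pick $b_k$ from the resulting tail. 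The main obstacle here is the bookkeeping required so that the diagonalization condition $B/b \leq A_n$ holds for \emph{every} $b \in [B]$ with $n = max(supp(b))$, including those $b$ obtained by nontrivial tetris combinations of several $b_i$'s; this is automatic, however, because any element of $[B/b_{k-1}]$ has support strictly beyond $m_{k-1}$, so monotonicity of the chain does the work. Once $FIN_k^{[\infty]}$ is shown to be a semiselective Gowers coideal, applying Theorem \ref{galvinlocal} to $\mathcal{H} = FIN_k^{[\infty]}$, the given $A$, and the given $\mathcal{F}$ immediately delivers the corollary.
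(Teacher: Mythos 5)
Your proposal is correct and matches the paper's intended (unwritten) proof: the corollary is exactly Theorem \ref{galvinlocal} specialized to the trivial coideal $\mathcal{H}=FIN_k^{[\infty]}$. Note that your hands-on verification of semiselectivity, while sound, could be shortened by observing that $FIN_k^{[\infty]}=\mathcal{A}^{\top}$ for $\mathcal{A}=FIN_k^{[\infty]}$, and the paper has already shown every $\mathcal{A}^{\top}$ to be a semiselective Gowers coideal.
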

\qed

\bigskip

It is not hard to prove the following generalization of Theorem
\ref{galvinlocal}:

\begin{thm}\label{galvinlocal2}
Given a semiselective Gowers coideal $\mathcal{H}\subseteq
FIN_k^{[\infty]}$, $A\in\mathcal{H}$, $\mathcal{F}\subseteq FIN_k^{[<\infty]}$
and $a\in FIN_k^{[<\infty]}$, there exists $B\in [a,A]\cap\mathcal{H}$
such that:
\begin{enumerate}
\item $[a,B]^{[<\infty]}\cap{\cal F}=\emptyset$, or
\item $\forall C\in [a,B]$ $(\exists \ b\in{\cal F})$ $(b\sqsubset C)$.
\end{enumerate}
\end{thm}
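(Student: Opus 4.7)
The plan is to adapt the proof of Theorem \ref{galvinlocal} almost verbatim, replacing the role of $\emptyset$ by $a$. Fix a semiselective Gowers coideal $\mathcal{H}$, $A\in\mathcal{H}$, $\mathcal{F}\subseteq FIN_k^{[<\infty]}$ and $a\in FIN_k^{[<\infty]}$; we may assume $a$ is compatible with $A$, otherwise $[a,A]=\emptyset$ and there is nothing to prove.

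First I would establish a localized analogue of Lemma \ref{decides}: there exists $B\in[a,A]\cap\mathcal{H}$ which decides every $b\in[a,B]^{[<\infty]}$. For each $b$ with $a\sqsubset b$ or $a=b$, the set $\mathcal{D}_b:=\{C\in\mathcal{H}\!\!\upharpoonright\!\!A: C\text{ decides }b\}$ is dense open in $(\mathcal{H}\!\!\upharpoonright\!\!A,\leq)$ by Lemma \ref{features}(2). Letting $\mathcal{D}_n$ be the intersection of the $\mathcal{D}_b$ over all such $b$ with $\max(\operatorname{supp}(b))\leq n$ (and $\mathcal{D}_n:=\mathcal{H}\!\!\upharpoonright\!\!A$ for $n\leq\max(\operatorname{supp}(a))$), each $\mathcal{D}_n$ remains dense open. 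Fix any $A'\in[a,A]\cap\mathcal{H}$ and apply semiselectivity inside $\mathcal{H}\!\!\upharpoonright\!\!A'$: the resulting diagonalization $B$ can be arranged to have $a$ as initial segment because the first $|a|$ dense open sets impose no condition. Then $B\in[a,A]\cap\mathcal{H}$ and $B$ decides every $b\in[a,B]^{[<\infty]}$.

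Next I split into two cases. If $B$ accepts $a$, then by definition every $C\in[a,B]$ has some $b\in\mathcal{F}$ with $b\sqsubset C$, which is exactly conclusion 2 of the theorem. Otherwise $B$ rejects $a$, and I mimic the end of the proof of Theorem \ref{galvinlocal}: for each $b\in[a,B]^{[<\infty]}$, set
$$\mathcal{D}_b:=\{C\in\mathcal{H}\!\!\upharpoonright\!\!B: C\text{ rejects every }c\in[b,C]^{[|b|+1]}\}$$
when $B$ rejects $b$, and $\mathcal{D}_b:=\mathcal{H}\!\!\upharpoonright\!\!B$ otherwise. These are dense open in $(\mathcal{H}\!\!\upharpoonright\!\!B,\leq)$ by Lemma \ref{features}(4). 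Semiselectivity yields a diagonalization $B_1\in[a,B]\cap\mathcal{H}$. By induction on $|b|$, with base case $b=a$ (handled by the hypothesis that $B$ rejects $a$ together with the downward preservation of rejection along $\leq$ in $\mathcal{H}$), $B_1$ rejects every $b\in[a,B_1]^{[<\infty]}$. Any $b\in\mathcal{F}\cap[a,B_1]^{[<\infty]}$ would be trivially accepted by $B_1$ (every $C\in[b,B_1]$ has $b\sqsubset C$ with $b\in\mathcal{F}$), contradicting rejection; hence $[a,B_1]^{[<\infty]}\cap\mathcal{F}=\emptyset$, which is conclusion 1 with $B_1$ in place of $B$.

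The main obstacle is the first step: ensuring that the diagonalization of the localized version of Lemma \ref{decides} actually lands in $[a,A]$ rather than merely in $\mathcal{H}\!\!\upharpoonright\!\!A$. This is handled by truncating the sequence of dense open sets so that the first $|a|$ of them are trivial, and by starting the whole procedure inside a previously fixed $A'\in[a,A]\cap\mathcal{H}$; the diagonalization procedure then produces a block sequence whose initial segment of length $|a|$ is exactly $a$. After this point, the argument transcribes that of Theorem \ref{galvinlocal} with $a$ in place of $\emptyset$, and the two conclusions follow from the case analysis above.
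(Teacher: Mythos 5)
Your proof is correct and follows exactly the route the paper intends: the paper omits the proof of Theorem \ref{galvinlocal2}, presenting it as a routine relativization of Theorem \ref{galvinlocal}, and your argument is precisely that relativization, including the one genuinely new point (arranging the diagonalizations to land in $[a,A]$ by prepending $a$ to a diagonalization taken below $A/a$ and trivializing the first few dense open sets). The only caveat is that your induction establishes rejection only for those $b\in[a,B_1]^{[<\infty]}$ that extend $a$, which matches the intended reading of conclusion 1 (and the way the theorem is applied in the proof of Theorem \ref{baire-ramsey}) rather than the broader literal definition of $[a,B_1]^{[<\infty]}$ given in Section \ref{fink}.
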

\qed

\bigskip

We are now ready to prove our main result.

\begin{proof} [\textbf{Proof of Theorem \ref{baire-ramsey}}]

Let ${\cal H}$ be a semiselective Gowers coideal. To obtain Part 1 we only have to prove the implication
$(\Leftarrow)$. Let $\mathcal{X}$ be ${\cal H}$--Baire in
$FIN_k^{[\infty]}$. Fix $[a,A]$ with $A\in {\cal H}$ and define
$${\cal F}_0=\{b\in FIN_k^{[\infty]}\colon a\sqsubseteq b {\mbox{ and
}}[b,A]\subseteq \mathcal{X}\}$$			
Consider $B_0\in [a,A]\cap\mathcal{H}$ as in Theorem \ref{galvinlocal2} when applied to
$\mathcal{F}_0$, $a$ and $B$. If Part 2 of Theorem \ref{galvinlocal2}
holds then $[a,B_0]\subseteq \mathcal{X}$ and we are done. Otherwise
define
$${\cal F}_1=\{b\in[B_0]^{[<\infty]}\colon a\sqsubseteq b {\mbox{ and
}}[b,B_0]\cap\mathcal{X}=\emptyset\}$$
Consider $B_1\in [a,B_0]\cap\mathcal{H}$ as in Theorem \ref{galvinlocal2} when applied to
$\mathcal{F}_1$, $a$ and $B_0$. If Part 2 of Theorem
\ref{galvinlocal2} holds for $B_1$ then $[a,B_1]\cap\mathcal{X} =
\emptyset$ and we are done. We claim that Part 1
of Theorem \ref{galvinlocal2} is not possible for $B_1$: otherwise, we could find $B_2\in [a,B_1]\cap\mathcal{H}$ as in Theorem \ref{galvinlocal2}
when applied to $\mathcal{F}_0\cup\mathcal{F}_1$, $a$ and $B_1$ which would necessarily satisfy $[B_2]^{[<\infty]}\cap(\mathcal{F}_0\cup\mathcal{F}_1) = \emptyset$. But this would contradict that $\mathcal{X}$
is $\mathcal{H}$--Baire.

\medskip

To obtain Part 2, again, we only have to prove the implication $(\Leftarrow)$. But it follows easily from Part 1 because any set which is ${\cal H}$-- nowhere dense and ${\cal H}$--Ramsey must necessarily be ${\cal H}$--Ramsey null. This concludes the proof of Theorem \ref{baire-ramsey}.
\end{proof}

\bigskip

The basic sets of the metric topology of $FIN_k^{[\infty]}$ as a subspace of
$FIN_k^{\mathbb{N}}$ are of the form  $$[b]=\{A\in FIN_k^{[\infty]}\colon b\sqsubset A\},$$
with $b\in FIN_k^{[<\infty]}$. As another consequence of Theorem
\ref{galvinlocal}, we have the following:

\begin{coro}\label{metricosfink}
If $\mathcal{H}$ is a semiselective Gowers coideal of block sequences
then every metric open subset of $FIN_k^{[\infty]}$ is
$\mathcal{H}$--Ramsey.
\end{coro}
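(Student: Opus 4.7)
The plan is to derive the corollary directly from Theorem \ref{galvinlocal2}, without routing through Theorem \ref{baire-ramsey}. First, I would express the given metric open set $\mathcal{U} \subseteq FIN_k^{[\infty]}$ as a union of the basic cylinders that generate the product topology, namely
$$\mathcal{U} = \bigcup_{b \in \mathcal{F}} [b], \quad \text{where } \mathcal{F} = \{b \in FIN_k^{[<\infty]} : [b] \subseteq \mathcal{U}\},$$
so that $\mathcal{U}$ is encoded by a single family $\mathcal{F}$ of finite block sequences. Given $a \in FIN_k^{[<\infty]}$ and $A \in \mathcal{H}$, the goal becomes the production of some $B \in [a,A] \cap \mathcal{H}$ with either $[a,B] \subseteq \mathcal{U}$ or $[a,B] \cap \mathcal{U} = \emptyset$.

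I would then apply Theorem \ref{galvinlocal2} to $\mathcal{H}$, $A$, $\mathcal{F}$, and $a$ to obtain $B \in [a,A] \cap \mathcal{H}$ satisfying one of the two alternatives. If alternative (2) holds, then every $C \in [a,B]$ has some $b \in \mathcal{F}$ as an initial segment, so $C \in [b] \subseteq \mathcal{U}$ and hence $[a,B] \subseteq \mathcal{U}$. If alternative (1) holds, and if some $C \in [a,B] \cap \mathcal{U}$ existed, then $C \in [b]$ for some $b \in \mathcal{F}$, giving $b \sqsubset C$; consequently $b$ is compatible with $C$, so $b \in [C]^{[<\infty]} \subseteq [a,B]^{[<\infty]}$, contradicting $[a,B]^{[<\infty]} \cap \mathcal{F} = \emptyset$. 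Thus in this case $[a,B] \cap \mathcal{U} = \emptyset$, and $\mathcal{U}$ is $\mathcal{H}$--Ramsey.

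The main obstacle is really just the notational bookkeeping between the basic cylinders $[b]$, the Ellentuck-type neighborhoods $[a,B]$, and the derived family $[a,B]^{[<\infty]}$; specifically, verifying that an initial segment of any $C \in [a,B]$ belongs to $[a,B]^{[<\infty]}$, which follows directly from $[a,B]^{[<\infty]} = \bigcup\{[D]^{[<\infty]} : D \in [a,B]\}$ together with the implication $b \sqsubset C \Rightarrow b \in [C]^{[<\infty]}$. Once this observation is in place, the corollary reduces to a single application of the Galvin-type dichotomy of Theorem \ref{galvinlocal2} to the family $\mathcal{F}$ of ``open indices'' of $\mathcal{U}$.
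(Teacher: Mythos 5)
Your proposal is correct and follows essentially the same route as the paper: both encode the open set as $\bigcup_{b\in\mathcal{F}}[b]$ for a family $\mathcal{F}\subseteq FIN_k^{[<\infty]}$ and then apply the Galvin-type dichotomy, with alternative (1) giving $[a,B]\cap\mathcal{U}=\emptyset$ and alternative (2) giving $[a,B]\subseteq\mathcal{U}$. The only cosmetic difference is that the paper reduces to $a=\emptyset$ and invokes Theorem \ref{galvinlocal}, whereas you handle general $a$ directly via Theorem \ref{galvinlocal2}; your write-up is in fact slightly more careful about the bookkeeping between $[b]$, $[a,B]$, and $[a,B]^{[<\infty]}$.
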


\begin{proof}
Let $\mathcal{X}\subseteq FIN_k^{[\infty]}$ be open
and fix $[a,A]$. Without loss of generality we can assume that
$a=\emptyset$. There exists $\mathcal{F}\subseteq FIN_k^{[<\infty]}$
such that $\mathcal{X}=\bigcup_{b\in \mathcal{F}}[b]$. Let $B\leq A$
be as in Theorem \ref{galvinlocal}. If parte 1 from the theorem
\ref{galvinlocal} holds then $[0,B]\subseteq\mathcal{X}^c$. If part
2 from the Theorem \ref{galvinlocal} holds, then $[0,B]\subseteq
\mathcal{X}$.
\end{proof}

In the next section we will see that in fact every analytic subset of $FIN_k^{[\infty]}$ is
$\mathcal{H}$--Ramsey.

\bigskip

We finish this section by proving from Theorem \ref{galvinlocal} the following local version of a generalization of Gowers' theorem \cite{Gow} due to Todorcevic's \cite{todo2}:

\begin{thm}\label{ramsey2}
Assume that ${\cal H}\subseteq FIN_k^{[\infty]}$ is a
semiselective Gowers coideal and $n\in\mathbb{N}$. Then, for every $r\in \mathbb{N}$, $r>1$, and every
$f\colon FIN_k^{[n]}\to \{0,1, \dots, r-1\}$ and $A\in {\cal H}$, there exists
$B\in \mathcal{H}\!\!\upharpoonright \!\!A$ such that $f$ is constant
on $[B]^{[n]}$.
\end{thm}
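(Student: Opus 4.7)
The plan is to prove Theorem \ref{ramsey2} by induction on the number of colors $r\geq 2$, peeling off one color at a time with the Semiselective Galvin's Lemma for block sequences (Theorem \ref{galvinlocal}). The inductive parameter is $r$; the length $n$ stays fixed throughout.

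For the base case $r=2$, set $\mathcal{F}:=f^{-1}(0)\subseteq FIN_k^{[n]}$ and apply Theorem \ref{galvinlocal} to $A$ and $\mathcal{F}$, obtaining $B\in\mathcal{H}\!\!\upharpoonright\!\! A$ satisfying one of its two alternatives. Because every member of $\mathcal{F}$ has length exactly $n$, both alternatives translate cleanly into the desired dichotomy. Alternative (1), namely $[B]^{[<\infty]}\cap\mathcal{F}=\emptyset$, immediately gives $[B]^{[n]}\cap f^{-1}(0)=\emptyset$, so $f\equiv 1$ on $[B]^{[n]}$. Alternative (2), that every $C\in[\emptyset,B]$ has some $a\in\mathcal{F}$ with $a\sqsubset C$, forces $a=C\!\!\upharpoonright\!\! n$ (since $\mathcal{F}$ contains only length-$n$ sequences). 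Any $c\in[B]^{[n]}$ equals $C\!\!\upharpoonright\!\! n$ for some $C\leq B$, hence for some $C\in[\emptyset,B]$, so $c\in\mathcal{F}$ and therefore $f\equiv 0$ on $[B]^{[n]}$.

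For the inductive step, suppose the result holds for $r-1\geq 2$ colors. Given $f:FIN_k^{[n]}\to\{0,1,\dots,r-1\}$ and $A\in\mathcal{H}$, apply the $r=2$ case to the two-valued coloring $\mathbf{1}_{f^{-1}(0)}$ to obtain $B_0\in\mathcal{H}\!\!\upharpoonright\!\! A$ such that either $[B_0]^{[n]}\subseteq f^{-1}(0)$ (and we are done) or $[B_0]^{[n]}\cap f^{-1}(0)=\emptyset$. In the latter case, extend the restriction $f\!\!\upharpoonright\!\![B_0]^{[n]}$ to a coloring $g:FIN_k^{[n]}\to\{1,\dots,r-1\}$ (the values outside $[B_0]^{[n]}$ are irrelevant), and apply the inductive hypothesis to $g$ and $B_0$, obtaining $B\in\mathcal{H}\!\!\upharpoonright\!\! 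B_0$ with $g$ constant on $[B]^{[n]}$. Since $B\leq B_0$ implies $[B]^{[n]}\subseteq[B_0]^{[n]}$ by transitivity of $\leq$, we have $f=g$ on $[B]^{[n]}$, completing the induction.

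The only delicate point, more bookkeeping than obstacle, is the translation of the alternatives of Theorem \ref{galvinlocal} into statements about $[B]^{[n]}$: this requires the observation that when $\mathcal{F}$ consists of sequences of uniform length $n$, the only possible initial segment of $C$ lying in $\mathcal{F}$ is $C\!\!\upharpoonright\!\! n$, which together with the reformulation $[B]^{[n]}=\{C\!\!\upharpoonright\!\! n:C\leq B,\ C\in FIN_k^{[\infty]}\}$ turns the two alternatives into $[B]^{[n]}\cap\mathcal{F}=\emptyset$ and $[B]^{[n]}\subseteq\mathcal{F}$, respectively.
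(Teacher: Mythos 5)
Your proposal is correct and follows exactly the paper's own argument: the base case $r=2$ is obtained by applying Theorem \ref{galvinlocal} to $\mathcal{F}=f^{-1}(0)$, and the general case follows by induction on $r$. The paper states this in two lines; you have simply supplied the routine verifications (that uniform length $n$ of the members of $\mathcal{F}$ turns the two alternatives of Theorem \ref{galvinlocal} into $[B]^{[n]}\cap\mathcal{F}=\emptyset$ and $[B]^{[n]}\subseteq\mathcal{F}$, and that $[B]^{[n]}\subseteq[B_0]^{[n]}$ when $B\leq B_0$), all of which are accurate.
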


\begin{proof}
For the case $r=2$, let $\mathcal{F}=f^{-1}(\{0\})$ and consider $B\in \mathcal{H}\!\!\upharpoonright \!\!A$ as in
Theorem \ref{galvinlocal}. For $r>2$, the required result follows by induction on $r$.
\end{proof}

\begin{defn}
A coideal $\mathcal{H}$ on $(FIN_k^{[\infty]},\leq)$ is ${\bf Ramsey}$ if for every
$\mathcal{X}\subseteq FIN_k^{[2]}$, there exists $B\in {\cal H}$ such
that $[B]^{[2]}\subseteq\mathcal{X}$ or $[B]^{[2]}\cap\mathcal{X}
= \emptyset$.
\end{defn}

So Theorem \ref{ramsey2} says that every semiselective Gowers
coideal is a Ramsey coideal.

\section{The Suslin operation}\label{suslinop}

Recall that given a set $X$ and a family ${\cal P}$ of
subsets of $X$, two subsets $A$, $B$ of $X$ are said to be
\emph{compatible} with respect to ${\cal P}$ if there exists $C\in
{\cal P}$ such that $C\subseteq A\cap B$. The family ${\cal P}$ is
said to be \emph{M-like} if given ${\cal Q}\subseteq {\cal P}$ with $|{\cal Q}|<|{\cal P}|$,
every member of ${\cal P}$ which is not compatible with any
member of ${\cal Q}$ is compatible with $X\setminus \bigcup
{\cal Q}$. Also, recall that a $\sigma$-algebra ${\cal A}$ of subsets of $X$
together with a $\sigma$-ideal ${\cal A}_0\subseteq{\cal A}$
is a \emph{Marczewski pair} if for every $A\subseteq X$ there
exists $\Phi(A)\in {\cal A}$ such that $A\subseteq \Phi(A)$ and
for every $B\subseteq \Phi(A)\setminus A$, $B\in{\cal A}\Rightarrow
B\in {\cal A}_0$.

\medskip

The goal of this section is to show that the family of
${\cal H}$--Ramsey subsets of $FIN_k^{[\infty]}$ is closed under
the Suslin operation, whenever ${\cal H}$ is a semiselective Gowers coideal of block sequences. Given a
family $(\mathcal{X}_a)_{a\in FIN_k^{[<\infty]}}$
of subsets of $FIN_k^{[\infty]}$, the result of
applying the Suslin operation to this family is:
$$\bigcup_{A\in FIN_k^{[\infty]}}\bigcap_{n\in\mathbb{N}}
\mathcal{X}_{A\upharpoonright n}$$

\medskip

The following is a well known fact:

\begin{thm}[Marczewski]\label{marcz}
Every $\sigma$-algebra of sets which together with a $\sigma$-ideal
is a Marczeswki pair, is closed under the Suslin operation.
\end{thm}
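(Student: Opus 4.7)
The plan is to follow the classical Marczewski scheme argument: approximate the Suslin set $\mathcal{X}^*$ from above by an $\mathcal{A}$-measurable hull and show the difference is absorbed by the $\sigma$-ideal $\mathcal{A}_0$. Fix a family $(\mathcal{X}_a)_{a\in FIN_k^{[<\infty]}}$ in $\mathcal{A}$ and write $\mathcal{X}^*=\bigcup_{A}\bigcap_n \mathcal{X}_{A\upharpoonright n}$. Replacing $\mathcal{X}_a$ with $\bigcap_{b\sqsubseteq a}\mathcal{X}_b$ if necessary, I may assume the scheme is \emph{regular}, i.e.\ $a\sqsubseteq b$ implies $\mathcal{X}_b\subseteq\mathcal{X}_a$; this does not alter $\mathcal{X}^*$.

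For each $a$ I define the local Suslin set
$$\mathcal{X}_a^*\;:=\;\bigcup_{A\sqsupset a}\,\bigcap_{n\geq |a|}\mathcal{X}_{A\upharpoonright n},$$
so that $\mathcal{X}_\emptyset^*=\mathcal{X}^*$; since each $a$ has only countably many one-step successors $b$ (those with $a\sqsubset b$, $|b|=|a|+1$), regularity yields the identity $\mathcal{X}_a^*=\bigcup_b\mathcal{X}_b^*$. Using the Marczewski pair hypothesis, I set $H_a:=\Phi(\mathcal{X}_a^*)\cap\mathcal{X}_a\in\mathcal{A}$. Then $\mathcal{X}_a^*\subseteq H_a\subseteq\mathcal{X}_a$, and any $\mathcal{A}$-subset of $H_a\setminus\mathcal{X}_a^*$ lies in $\mathcal{A}_0$. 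In particular, since $\bigcup_b H_b\supseteq\bigcup_b\mathcal{X}_b^*=\mathcal{X}_a^*$, the set $G_a:=H_a\setminus\bigcup_b H_b$ is a measurable subset of $H_a\setminus\mathcal{X}_a^*$, so $G_a\in\mathcal{A}_0$; by $\sigma$-additivity $N:=\bigcup_a G_a\in\mathcal{A}_0$.

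The decisive inclusion to establish is $H_\emptyset\setminus N\subseteq\mathcal{X}^*$. Given $x\in H_\emptyset\setminus N$, the condition $x\notin G_\emptyset$ forces $x\in H_{b_1}$ for some one-step successor $b_1$ of $\emptyset$; since $x\notin G_{b_1}$, there is a one-step successor $b_2\sqsupset b_1$ with $x\in H_{b_2}$; iterating yields an infinite basic block sequence $B\in FIN_k^{[\infty]}$ such that $x\in H_{B\upharpoonright n}\subseteq\mathcal{X}_{B\upharpoonright n}$ for every $n$, whence $x\in\mathcal{X}^*$. Combined with the trivial inclusion $\mathcal{X}^*\subseteq H_\emptyset$, this gives $H_\emptyset\setminus\mathcal{X}^*\subseteq N\in\mathcal{A}_0$. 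Since $\mathcal{A}_0$ is hereditary, $H_\emptyset\setminus\mathcal{X}^*\in\mathcal{A}_0\subseteq\mathcal{A}$, and therefore $\mathcal{X}^*=H_\emptyset\setminus(H_\emptyset\setminus\mathcal{X}^*)\in\mathcal{A}$, as required.

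The main delicate point I anticipate is the recursive extraction of the previous paragraph: one must verify that the one-step-successor structure on $FIN_k^{[<\infty]}$ genuinely permits the chosen sequence $(b_n)$ to assemble into an element of $FIN_k^{[\infty]}$, and that the identity $\mathcal{X}_a^*=\bigcup_b\mathcal{X}_b^*$ truly follows from regularity. Once these combinatorial checks are in place, the argument is a routine invocation of the Marczewski pair property together with $\sigma$-additivity and hereditariness of $\mathcal{A}_0$.
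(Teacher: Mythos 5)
The paper gives no proof of this theorem at all: it is quoted as a classical fact (with pointers to Morgan, Pawlikowski and Farah), so there is no in-paper argument to compare yours against. Your proof is the standard Marczewski scheme argument and it is essentially correct. The regularization, the identity $\mathcal{X}_a^*=\bigcup_b\mathcal{X}_b^*$, the hulls $H_a=\Phi(\mathcal{X}_a^*)\cap\mathcal{X}_a$, the null set $N=\bigcup_a G_a$, and the branch extraction all go through; in particular the two combinatorial checks you flag are unproblematic here, since every $a\in FIN_k^{[<\infty]}$ has countably many one-step successors (so $N$ is a countable union and $\mathcal{X}_a^*=\bigcup_b\mathcal{X}_b^*$ follows from regularity exactly as you say), and an increasing chain $b_1\sqsubset b_2\sqsubset\cdots$ with $|b_n|=n$ unions to an element $B\in FIN_k^{[\infty]}$ with $B\upharpoonright n=b_n$. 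The one step that deserves an explicit word is the last: you invoke hereditariness of $\mathcal{A}_0$ (closure under arbitrary subsets, not merely under subsets that already lie in $\mathcal{A}$) to pass from $H_\emptyset\setminus\mathcal{X}^*\subseteq N\in\mathcal{A}_0$ to $H_\emptyset\setminus\mathcal{X}^*\in\mathcal{A}_0\subseteq\mathcal{A}$. The paper's one-line definition of a Marczewski pair does not state this property, so you should either record it as part of what ``$\sigma$-ideal'' means in this context or note that it holds for the pair $(\mathcal{R}(\mathcal{H}),\mathcal{R}_0(\mathcal{H}))$ to which the theorem is applied, since any subset of an $\mathcal{H}$--Ramsey null set is trivially $\mathcal{H}$--Ramsey null; without some such hypothesis the set $\mathcal{X}^*\cap N$ need not be measurable and the final deduction would not close. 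With that caveat made explicit, the proof is complete.
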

\qed

The following proposition shows that the family $\mathcal{R}
(\mathcal{H})$ of ${\cal H}$--Ramsey
subsets of ${FIN_k^{[\infty]}}$ is a $\sigma$-algebra and the
collection $\mathcal{R}_0(\mathcal{H})$ of ${\cal H}$--Ramsey null
subsets of ${FIN_k^{[\infty]}}$ is a $\sigma$-ideal  of it.

\begin{prop}\label{sigmaideal}
If ${\cal H}\subseteq FIN_k^{[\infty]}$ is a semiselective Gowers
coideal of block sequences then the families of ${\cal H}$--Ramsey and
${\cal H}$--Ramsey null subsets of $FIN_k^{[\infty]}$ are closed under
countable union.
\end{prop}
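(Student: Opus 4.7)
The plan is to split Proposition~\ref{sigmaideal} into two claims: (a) $\mathcal{R}_0(\mathcal{H})$ is closed under countable unions, and (b) $\mathcal{R}(\mathcal{H})$ is closed under countable unions. I will first prove (a) by a semiselective fusion argument built on the $\mathcal{H}$-Ramsey null property of each $\mathcal{X}_n$, and then derive (b) from (a) via a dichotomy together with Theorem~\ref{baire-ramsey}, which identifies $\mathcal{R}(\mathcal{H})$ with the class of $\mathcal{H}$-Baire sets.

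For (a), fix $(\mathcal{X}_n)_n \subseteq \mathcal{R}_0(\mathcal{H})$ and $[a,A]$ with $A\in\mathcal{H}$; the goal is to produce $B\in[a,A]\cap\mathcal{H}$ with $[a,B]\cap\bigcup_n\mathcal{X}_n=\emptyset$. For each $n\in\mathbb{N}$ and each $b\in[A]^{[<\infty]}$ with $a\sqsubseteq b$, I will set
\[
\mathcal{D}_{n,b} = \{D\in\mathcal{H}\!\!\upharpoonright\!\! A : b\in[D]^{[<\infty]}\Rightarrow [b,D]\cap\mathcal{X}_n=\emptyset\}
\]
and verify, using the $\mathcal{H}$-Ramsey null property of $\mathcal{X}_n$ applied to $[b,A']$, that $\mathcal{D}_{n,b}$ is dense open in $(\mathcal{H}\!\!\upharpoonright\!\! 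A,\leq)$. For each $m\in\mathbb{N}$ the finite intersection
\[
\mathcal{D}_m = \bigcap\{\mathcal{D}_{n,b} : n\leq m,\ a\sqsubseteq b\in[A]^{[<\infty]},\ \max\mathrm{supp}(b)\leq m\}
\]
is again dense open, and semiselectivity will deliver a decreasing $(A_m)_m\subseteq\mathcal{H}\!\!\upharpoonright\!\! A$ with $A_m\in\mathcal{D}_m$ together with a diagonalization $B\in[a,A]\cap\mathcal{H}$. To show $[a,B]\cap\mathcal{X}_n=\emptyset$ for each $n$, I pick $B'\in[a,B]$, write $B'/a=(y_1,y_2,\dots)$ with $y_i\in[B/a]$, choose $j$ with $m:=\max\mathrm{supp}(y_j)\geq n$, and set $b=a\smallfrown(y_1,\dots,y_j)$. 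The diagonalization yields $B/y_j\leq A_m$; and the observation that any element of $[B/a]$ whose support lies strictly above $\max\mathrm{supp}(y_j)$ must arise as a tetris combination of elements of $B/y_j$ gives $B'/b\subseteq[B/y_j]\subseteq[A_m]$, so $B'\in[b,A_m]$. Since $A_m\in\mathcal{D}_{n,b}$, this forces $B'\notin\mathcal{X}_n$.

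For (b), let $(\mathcal{X}_n)_n\subseteq\mathcal{R}(\mathcal{H})$ and $\mathcal{X}=\bigcup_n\mathcal{X}_n$; by Theorem~\ref{baire-ramsey} it suffices to show $\mathcal{X}$ is $\mathcal{H}$-Baire. Fix $[a,A]$ with $A\in\mathcal{H}$ and split into cases: either some $\mathcal{X}_n$ contains a sub-neighborhood $[b,C]\subseteq[a,A]$ with $C\in\mathcal{H}$, in which case $[b,C]\subseteq\mathcal{X}$ and $\mathcal{X}$ is $\mathcal{H}$-Baire at $[a,A]$; or no such sub-neighborhood exists, in which case applying the $\mathcal{H}$-Ramsey dichotomy for each $\mathcal{X}_n$ inside every $[c,C]\subseteq[a,A]$ and ruling out the first alternative shows that each $\mathcal{X}_n$ satisfies the Ramsey null condition localized to $[a,A]$, so the fusion from paragraph~2 (run inside $[a,A]$) yields $B\in[a,A]\cap\mathcal{H}$ with $[a,B]\cap\mathcal{X}=\emptyset$. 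The main obstacle is the fusion step of (a): one has to align the initial segment $b\sqsubset B'$ of an arbitrary $B'\in[a,B]$ with the step index $m$ of the diagonalization so that $A_m\in\mathcal{D}_{n,b}$ really delivers $[b,A_m]\cap\mathcal{X}_n=\emptyset$, and this alignment is what forces the finite-intersection indexing of $\mathcal{D}_m$ by both $n$ and $\max\mathrm{supp}(b)$ and what makes essential use of the tetris structure of $[B/a]$ together with the diagonalization property of $B$.
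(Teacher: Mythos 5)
Your proof is correct and follows essentially the same route as the paper's: a fusion of dense open sets $\mathcal{D}_b$ (the paper merges your pairs $(n,b)$ by assuming the $\mathcal{X}_n$ increasing and requiring $[b,C]\cap\mathcal{X}_n=\emptyset$ for all $n\le |b|$), diagonalized via semiselectivity for the Ramsey-null case, followed by the same dichotomy reducing the Ramsey case to a localized null case. The alignment issue you single out as the main obstacle is exactly what the paper absorbs into its ``Notice'' following the definition of diagonalization, namely that $[b,B]\subseteq[b,A_n]$ whenever $b\in[B]^{[<\infty]}$ and $n=\max(supp(b))$.
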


\begin{proof}  Fix $[a,A]$ with $A\in{\cal H}$. Again, we will suppose that $a=\emptyset$.

\medskip

Suppose that $({\cal X}_n)_{n\geq1}$ is a
sequence of ${\cal H}$--Ramsey null subsets of $FIN_k^{[\infty]}$. We can and will also assume that $\mathcal{X}_{n}\subseteq \mathcal{X}_{n+1}$ for all $n$
without loss of generality. For every $b\in FIN_k^{[<\infty]}$ define
$$\mathcal{D}_b=\{C\in \mathcal{H}\colon [b,C]\cap \mathcal{X}_n
=\emptyset \mbox{ for all }n\leq|b|\}$$
Every such $\mathcal{D}_b$ is dense open in $({\cal H},\leq)$. For every $n\in\mathbb{N}$, let $$\mathcal{D}_n=\bigcap\{\mathcal{D}_b
\colon max(supp(b))\leq n \}.$$ Then every $\mathcal{D}_n$ is dense open too.
Let $B\in \mathcal{H}\!\!\upharpoonright\!\!A$ be a diagonalization
of $(\mathcal{D}_n)_n$. Then $[0,B]\cap\bigcup_n\mathcal{X}_n = \emptyset$.

\medskip

Now, suppose that $({\cal X}_n)_{n\geq1}$ is a sequence of
${\cal H}$--Ramsey subsets of $FIN_k^{[\infty]}$.
If there exists $B\in{\cal H}\!\!\upharpoonright \!\!A$ such that
$[0,B]\subseteq {\cal X}_n$ for some $n$, we are done. Otherwise,
using an argument similar to the one above, we prove that $\bigcup {\cal X}_n$
 is ${\cal H}$--Ramsey null.
\end{proof}

Given a semiselective Gowers coideal ${\cal H}\subseteq FIN_k^{[\infty]}$,
in order to show that ($\mathcal{R}(\mathcal{H})$,$\mathcal{R}_0(\mathcal{H})$)
forms a Marczeswki pair it is sufficient to prove the following
(see \cite{Mor}, \cite{pawl} or \cite{farah}):

\begin{prop}\label{mlike}
Let ${\cal H}$ be a semiselective Gowers coideal of block sequences.
Assuming \textbf{CH}, the family
$$Exp({\cal H}) : = \{[a,A] : a\in FIN_k^{[\infty]}, A\in{\cal H}\}$$
 is $M$-like.
\end{prop}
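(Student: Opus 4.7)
The plan is to use \textbf{CH} to reduce to a countable threatening family $\mathcal{Q}$, and then build the witness via the $\mathcal{H}$--Ramsey property of closed sets together with a semiselective fusion, in the spirit of the proof of Proposition \ref{sigmaideal}. Since $|FIN_k^{[\infty]}|=\mathfrak{c}$, under \textbf{CH} we have $|Exp(\mathcal{H})|=\aleph_1$, so $|\mathcal{Q}|<|Exp(\mathcal{H})|$ forces $\mathcal{Q}$ to be countable; enumerate it as $\mathcal{Q}=\{[b_n,B_n]:n\in\mathbb{N}\}$. Given $[a,A]\in Exp(\mathcal{H})$ incompatible with every $[b_n,B_n]$, I aim to produce $C\in[a,A]\cap\mathcal{H}$ with $[a,C]\cap[b_n,B_n]=\emptyset$ for every $n$; then $[a,C]$ witnesses compatibility of $[a,A]$ with $FIN_k^{[\infty]}\setminus\bigcup\mathcal{Q}$.

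First I would observe that each $[b_n,B_n]$ is closed in the metric topology of $FIN_k^{[\infty]}$, being an intersection of countably many clopen conditions on coordinates, and therefore Borel. Corollary \ref{metricosfink} gives that metric open sets are $\mathcal{H}$--Ramsey; since $\mathcal{H}$--Ramsey is symmetric under complementation and $\mathcal{R}(\mathcal{H})$ is closed under countable unions by Proposition \ref{sigmaideal}, $\mathcal{R}(\mathcal{H})$ is a $\sigma$--algebra containing the open sets, so every Borel set, in particular each $[b_n,B_n]$, is $\mathcal{H}$--Ramsey. Combined with the incompatibility of $[a,A]$ this yields the key local claim: for every $b\in FIN_k^{[<\infty]}$ with $a\sqsubseteq b$ compatible with $A$, every $n$, and every $A'\in\mathcal{H}\!\!\upharpoonright\!\!A$ with $[b,A']\neq\emptyset$, the Ramsey dichotomy applied to $[b_n,B_n]$ at $[b,A']$ produces $B\in[b,A']\cap\mathcal{H}$ with $[b,B]\cap[b_n,B_n]=\emptyset$, because the opposite alternative $[b,B]\subseteq[b_n,B_n]$ would give $[b,B]\subseteq[a,A]\cap[b_n,B_n]$ (using $B\leq A$ and $a\sqsubseteq b\sqsubset B$), contradicting the incompatibility hypothesis.

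Second, following the proof of Proposition \ref{sigmaideal}, I fuse these local choices via semiselectivity. For each $b\in FIN_k^{[<\infty]}$ with $a\sqsubseteq b$ define
$$\mathcal{E}_b=\{D\in\mathcal{H}\!\!\upharpoonright\!\!A : [b,D]\cap[b_m,B_m]=\emptyset \text{ for every } m\leq|b|\},$$
and set $\mathcal{E}_n=\bigcap\{\mathcal{E}_b : a\sqsubseteq b,\ \max(\mathrm{supp}(b))\leq n\}$. Monotonicity of $[b,\cdot]$ in its second argument makes each $\mathcal{E}_b$ open in $\mathcal{H}\!\!\upharpoonright\!\!A$, and iterating the key local claim through $m=0,1,\ldots,|b|$ gives density of $\mathcal{E}_b$ below $A$; hence each $\mathcal{E}_n$ is dense open below $A$. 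Semiselectivity, transferred to $\mathcal{H}\!\!\upharpoonright\!\!A$ via the canonical isomorphism $\Phi$ of Section \ref{fink}, then produces $C\in\mathcal{H}\!\!\upharpoonright\!\!A$ with $a\sqsubset C$ diagonalizing $(\mathcal{E}_n)_n$ (the initial segment $a$ is preserved by carrying out the construction inside a fixed $A_0\in[a,A]\cap\mathcal{H}$). For every $b\in[C]^{[<\infty]}$ extending $a$ with $n=\max(\mathrm{supp}(b))$ one has $[b,C]\subseteq[b,D]$ for some $D\in\mathcal{E}_n$; in particular, taking $b=a$ and letting $n$ be arbitrary, $[a,C]\cap[b_n,B_n]=\emptyset$ for every $n$, as required.

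The delicate point I expect to navigate is the interface between the form of semiselectivity given in Section \ref{theory} (phrased for dense open sets in all of $\mathcal{H}$) and the sets $\mathcal{E}_n$ above, which naturally live in $\mathcal{H}\!\!\upharpoonright\!\!A$, together with the bookkeeping required to keep $a$ as an initial segment of the diagonalizer. Both issues are handled by a standard transfer through the canonical isomorphism $\Phi$, which makes $\mathcal{H}\!\!\upharpoonright\!\!A$ into a semiselective Gowers coideal on its own isomorphic copy of $FIN_k$: after shifting by $a$ one reduces to the case $a=\emptyset$ already treated in the proof of Proposition \ref{sigmaideal}.
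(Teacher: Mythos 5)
Your proposal is correct and follows essentially the same route as the paper: under \textbf{CH} the threatening family is countable, each basic set $[b_n,B_n]$ is $\mathcal{H}$--Ramsey (being closed), and the incompatibility hypothesis rules out the containment alternative, so a single application of the Ramsey dichotomy to the countable union yields the witness. The only difference is that you inline the diagonalization argument of Proposition \ref{sigmaideal} (and justify the Ramseyness of the $[b_n,B_n]$ and the dismissal of the containment alternative in more detail), whereas the paper simply cites that proposition to conclude that $\bigcup\mathcal{B}$ is $\mathcal{H}$--Ramsey.
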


\begin{proof} Consider ${\cal B}\subseteq Exp({\cal H})$ with
 $|{\cal B}|<|Exp({\cal H})|=2^{\aleph_0}$ and suppose that
 $[a,A]\in Exp({\cal H})$ and $[a,A]$ is not compatible with any member of ${\cal B}$, i. e.
 for every $Y\in {\cal B}$, $Y\cap [a,A]$ does not contain any
 member of $Exp({\cal H})$. We claim that $[a,A]$ is compatible
  with $FIN_k^{[\infty]}\smallsetminus \bigcup {\cal B}$. In fact:

By proposition \ref{sigmaideal}$,  \bigcup {\cal B}$ is
${\cal H}$--Ramsey. So, there exist  $B\in {\cal H}\upharpoonright A$ such that:
\begin{enumerate}
\item $[a,B]\subseteq \bigcup {\cal B}$ or
\item $[a,B]\subseteq {FIN_k^{[\infty]}}\smallsetminus \bigcup {\cal B}$
\end{enumerate}

Alternative 1 is not possible because $[a,A]$ is not compatible with any member
of ${\cal B}$. This completes the proof.
\end{proof}

\begin{coro}
Assuming \textbf{CH}, if $\mathcal{H}$ is a semiselective Gowers coideal on
$(FIN_k^{[\infty]}, \leq)$ then ($\mathcal{R}(\mathcal{H})$,$\mathcal{R}_0
(\mathcal{H})$) forms a Marczeswki pair.
\end{coro}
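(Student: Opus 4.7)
The plan is straightforward: combine the two propositions already in hand and invoke the standard Marczewski-hull reduction cited in \cite{Mor}, \cite{pawl}, and \cite{farah}. Proposition \ref{sigmaideal} provides the $\sigma$-algebra / $\sigma$-ideal structure of $(\mathcal{R}(\mathcal{H}),\mathcal{R}_0(\mathcal{H}))$, while Proposition \ref{mlike} gives the $M$-likeness of the base $Exp(\mathcal{H})$ under \textbf{CH}. For each $\mathcal{X}\subseteq FIN_k^{[\infty]}$, I need to produce $\Phi(\mathcal{X})\in\mathcal{R}(\mathcal{H})$ containing $\mathcal{X}$ such that every $\mathcal{H}$--Ramsey subset of $\Phi(\mathcal{X})\setminus\mathcal{X}$ is $\mathcal{H}$--Ramsey null.

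Using \textbf{CH}, I would enumerate $Exp(\mathcal{H})=\{[a_\alpha,A_\alpha]:\alpha<\omega_1\}$ and, by transfinite recursion, construct a family $\{[b_\alpha,B_\alpha]:\alpha<\omega_1\}$ that is maximal among families of pairwise $Exp(\mathcal{H})$--incompatible basic neighborhoods disjoint from $\mathcal{X}$. Concretely, at stage $\alpha$: if there exists $[c,C]\in Exp(\mathcal{H})$ with $[c,C]\subseteq [a_\alpha,A_\alpha]\cap\mathcal{X}^c$ which is $Exp(\mathcal{H})$--incompatible with every $[b_\beta,B_\beta]$ previously selected, take it as $[b_\alpha,B_\alpha]$; otherwise leave the stage vacuous. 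Set $\Phi(\mathcal{X}):=FIN_k^{[\infty]}\setminus\bigcup_{\alpha<\omega_1}[b_\alpha,B_\alpha]$; the containment $\mathcal{X}\subseteq\Phi(\mathcal{X})$ is immediate.

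Two things remain to check. First, that $\Phi(\mathcal{X})\in\mathcal{R}(\mathcal{H})$. Fixing $[a,A]$ with $A\in\mathcal{H}$: either $[a,A]$ shares a common sub-neighborhood with some $[b_\alpha,B_\alpha]$, producing a refinement inside $\Phi(\mathcal{X})^c$; or $[a,A]$ is $Exp(\mathcal{H})$--incompatible with every $[b_\alpha,B_\alpha]$, in which case Proposition \ref{mlike} applied at the countable initial segments of the construction, combined with the maximality built in at the stage where $[a,A]$ was considered, delivers a refinement inside $\Phi(\mathcal{X})$. Thus $\Phi(\mathcal{X})$ is $\mathcal{H}$--Baire, hence $\mathcal{H}$--Ramsey by Theorem \ref{baire-ramsey}. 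Second, that $\Phi(\mathcal{X})\setminus\mathcal{X}$ contains no $\mathcal{R}(\mathcal{H})$--member outside $\mathcal{R}_0(\mathcal{H})$: if $\mathcal{Y}\subseteq\Phi(\mathcal{X})\setminus\mathcal{X}$ were $\mathcal{H}$--Ramsey and not $\mathcal{H}$--Ramsey null, the Ramsey dichotomy would yield some $[b,B]\subseteq\mathcal{Y}\subseteq\mathcal{X}^c$ with $B\in\mathcal{H}$, contradicting the maximality of the recursive family.

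The main obstacle is the first verification. Because $\bigcup_{\alpha<\omega_1}[b_\alpha,B_\alpha]$ is an uncountable union, the $\sigma$-additivity of $\mathcal{R}(\mathcal{H})$ from Proposition \ref{sigmaideal} does not by itself certify that the complement lies in $\mathcal{R}(\mathcal{H})$; $M$-likeness from Proposition \ref{mlike} is precisely the substitute that lets the $\aleph_1$-length construction be decided neighborhood-by-neighborhood inside the countable initial segments where $\sigma$-additivity does apply, and Theorem \ref{baire-ramsey} is what finally promotes the resulting Baire-type dichotomy to a Ramsey dichotomy.
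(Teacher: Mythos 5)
Your overall strategy is exactly the paper's: the corollary is stated with no proof beyond the remark preceding Proposition \ref{mlike}, namely that the $\sigma$-structure from Proposition \ref{sigmaideal} together with the $M$-likeness of $Exp(\mathcal{H})$ yields a Marczewski pair by the standard reduction of \cite{Mor}, \cite{pawl}, \cite{farah}. You attempt to unpack that reduction, and the unpacking breaks down at precisely the step you yourself call ``the main obstacle.''

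The gap is in the verification that $\Phi(\mathcal{X})$ is $\mathcal{H}$--Baire at a neighborhood $[a,A]=[a_\gamma,A_\gamma]$ that is $Exp(\mathcal{H})$--incompatible with every $[b_\alpha,B_\alpha]$. Applying Proposition \ref{mlike} to the countable initial segment $\mathcal{Q}_{<\gamma}=\{[b_\beta,B_\beta]:\beta<\gamma\}$ produces some $[c,C]\subseteq[a,A]$ with $[c,C]\cap\bigcup\mathcal{Q}_{<\gamma}=\emptyset$; but $\Phi(\mathcal{X})$ is the complement of the union over \emph{all} $\alpha<\omega_1$, and nothing prevents $[c,C]$ from meeting $[b_\delta,B_\delta]$ for $\delta>\gamma$. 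Incompatibility in $Exp(\mathcal{H})$ means absence of a common refinement, not disjointness, so the later members of the antichain were never required to avoid $[c,C]$. The maximality you invoke at stage $\gamma$ (that no $[c,C]\subseteq[a_\gamma,A_\gamma]\cap\mathcal{X}^{c}$ is incompatible with $\mathcal{Q}_{<\gamma}$) only tells you that the $[c,C]$ produced by $M$-likeness must meet $\mathcal{X}$; it says nothing about the sets removed at stages $\delta\geq\gamma$, so it does not deliver $[c,C]\subseteq\Phi(\mathcal{X})$. To close this you would need either to show the antichain can be taken of size $<2^{\aleph_0}$ (so that Proposition \ref{mlike} applies to it in one blow), or to modify the recursion so that positive witnesses chosen at earlier stages are protected from later selections --- which is exactly the extra bookkeeping the cited references carry out and which your sketch omits. (Your second verification, that maximality forbids a non-null $\mathcal{H}$--Ramsey subset of $\Phi(\mathcal{X})\setminus\mathcal{X}$, is fine.)
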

\qed

Now we use the following result due to Platek \cite{platek}:

\begin{thm}\label{platek}
The use of \textbf{CH} can be eliminated from the proof of any
statement involving only quantification over the reals and possibly
some fixed set of reals as a predicate.
\end{thm}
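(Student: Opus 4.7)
The plan is a metamathematical forcing-plus-absoluteness argument. Let $\varphi$ be a sentence whose nonlogical content is only quantification over $\mathbb{R}$ together with a unary predicate symbol interpreted by some fixed set $A\subseteq\mathbb{R}$, and suppose $\varphi(A)$ is provable in $\mathrm{ZFC}$ together with \textbf{CH}. I need to show that $\varphi(A)$ is in fact provable in $\mathrm{ZFC}$ alone; equivalently, that $V\models\varphi(A)$ for every $V\models\mathrm{ZFC}$.

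First, I would pass from $V$ to a suitable forcing extension in which \textbf{CH} holds. A natural choice is the Levy collapse $\mathrm{Col}(\omega_1,2^{\aleph_0})$, whose purpose is only to produce a well-ordering of the continuum of length $\omega_1$. In the extension $V[G]$ we have $\mathrm{ZFC}$ together with \textbf{CH}, and hence by hypothesis $V[G]\models\varphi(A)$.

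Second, I would transfer the truth of $\varphi(A)$ back from $V[G]$ to $V$ by an absoluteness argument. The restriction that $\varphi$ quantifies only over reals and mentions $A$ as the sole nonlogical parameter is what makes this possible: $\varphi(A)$ can be coded as the well-foundedness of a tree built from $A$ and the quantifier-alternation structure of $\varphi$, and well-foundedness of such trees is absolute between transitive models of $\mathrm{ZFC}$ sharing the relevant reals.

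The main obstacle is precisely this absoluteness step when $\varphi$ has high projective complexity, since classical Shoenfield absoluteness handles only $\Sigma^1_2$ statements with real parameters. Platek's argument resolves this by iterating the forcing/absoluteness pass: one picks successive intermediate extensions so that each block of real quantifiers can be witnessed or refuted absolutely, while verifying that the forcings preserve $A$ and the reals that matter for the next block. Once this iterative scheme is set up, the conclusion follows formally.
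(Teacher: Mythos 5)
The paper does not actually prove this statement: it is quoted from Platek's 1969 paper and closed with \qed, so there is no in-paper argument to compare against, and your attempt must be judged on its own.

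Your opening move is right: pass to a forcing extension $V[G]$ by $\mathrm{Col}(\omega_1,2^{\aleph_0})$ in which \textbf{CH} holds, so that $V[G]\models\varphi(A)$. The genuine gap is in the transfer back to $V$. You frame it as a projective-absoluteness problem, correctly observe that Shoenfield only reaches $\Sigma^1_2$, and then appeal to an ``iterated forcing/absoluteness pass'' that is never made precise and cannot work in general: a sentence that merely quantifies over reals may sit at an arbitrary level of (or beyond) the projective hierarchy, where absoluteness between $V$ and its forcing extensions is simply not a theorem of ZFC, and no well-founded-tree coding of $\varphi(A)$ built from $A$ and the quantifier structure is available. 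The point you are missing is that no absoluteness theorem is needed at all: $\mathrm{Col}(\omega_1,2^{\aleph_0})$ with countable conditions is $\sigma$-closed, hence preserves $\omega_1$ and adds no new reals. Therefore $\mathbb{R}^{V[G]}=\mathbb{R}^{V}$ and $A$ is unchanged, so $V$ and $V[G]$ compute literally the same structure $(\mathbb{R},A,\dots)$, and any sentence whose quantifiers range only over reals and whose sole nonlogical parameter is $A$ has the same truth value in both models, whatever its complexity. With that observation your first step already completes the argument (modulo the routine metamathematical step converting truth in all forcing extensions into provability in ZFC). As written, however, the transfer step is a gap, and the proposed repair points in the wrong direction.
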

\qed

Since the statement
\begin{center}
``$\bigcup_{A\in FIN_k^{[\infty]}}\bigcap_{n\in\mathbb{N}}
\mathcal{X}_{A\upharpoonright n}$
is not $\mathcal{H}$--Ramsey''
\end{center}
is false under \textbf{CH} by Theorem \ref{marcz} and it has the
form required  in Theorem \ref{platek}, we have the following:

\begin{coro}
If $\mathcal{H}$ is a semiselective
Gowers coideal of block sequences then the family of ${\cal H}$--Ramsey
subsets of ${FIN_k^{[\infty]}}$ is closed
under the Suslin operation,
\end{coro}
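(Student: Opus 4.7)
The plan is to read the corollary as the natural culmination of the chain of results immediately preceding it: combine the Marczewski pair corollary (which held under \textbf{CH}) with the Marczewski theorem (Theorem \ref{marcz}) to obtain Suslin-closure under \textbf{CH}, and then invoke Platek's theorem (Theorem \ref{platek}) to eliminate the hypothesis \textbf{CH}.

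First I would fix a semiselective Gowers coideal $\mathcal{H}$ and a family $(\mathcal{X}_a)_{a\in FIN_k^{[<\infty]}}$ of $\mathcal{H}$--Ramsey subsets of $FIN_k^{[\infty]}$, and set $\mathcal{Y} = \bigcup_{A\in FIN_k^{[\infty]}}\bigcap_{n\in\mathbb{N}} \mathcal{X}_{A\upharpoonright n}$. Working temporarily under \textbf{CH}, the corollary just established tells us that $(\mathcal{R}(\mathcal{H}),\mathcal{R}_0(\mathcal{H}))$ is a Marczewski pair, and Theorem \ref{marcz} then yields that $\mathcal{R}(\mathcal{H})$ is closed under the Suslin operation; in particular $\mathcal{Y}\in\mathcal{R}(\mathcal{H})$. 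Thus, under \textbf{CH}, the negated statement ``$\mathcal{Y}$ is not $\mathcal{H}$--Ramsey'' is false.

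Next I would verify that this negated statement has the syntactic form needed to apply Theorem \ref{platek}. The set $FIN_k^{[\infty]}$ is naturally a set of reals (a subspace of the Polish space $FIN_k^{\mathbb{N}}$), the index set $FIN_k^{[<\infty]}$ is countable, and a family $(\mathcal{X}_a)_a$ of subsets of $FIN_k^{[\infty]}$ is coded by a single set of reals. The assertion ``$\mathcal{X}$ is $\mathcal{H}$--Ramsey'' is expressible by quantification over reals once $\mathcal{H}$ is allowed as a predicate (it quantifies over $a\in FIN_k^{[<\infty]}$, $A\in\mathcal{H}$, asserts the existence of $B\in[a,A]\cap\mathcal{H}$ with the usual homogeneity). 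Hence the statement ``for every family $(\mathcal{X}_a)_a$ of $\mathcal{H}$--Ramsey sets, $\mathcal{Y}$ is $\mathcal{H}$--Ramsey'' is of the form covered by Platek's theorem, with $\mathcal{H}$ as the fixed set of reals appearing as a predicate. Platek's theorem then eliminates \textbf{CH}, and the corollary follows in \textbf{ZFC}.

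The main obstacle, and really the only nontrivial point beyond assembling the pieces already proved, is the careful syntactic check that the assertion to be transferred via Theorem \ref{platek} genuinely fits its hypotheses, i.e.\ quantifies only over reals modulo the predicate $\mathcal{H}$. Everything else is a direct application of prior results: the Marczewski-pair corollary supplies the \textbf{CH} case, Theorem \ref{marcz} converts this into Suslin-closure, and Theorem \ref{platek} removes the set-theoretic assumption.
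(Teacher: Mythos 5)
Your proposal is correct and follows exactly the paper's own route: the Marczewski-pair corollary plus Theorem \ref{marcz} give Suslin-closure under \textbf{CH}, and Theorem \ref{platek} (applied to the negated statement, with $\mathcal{H}$ as the fixed predicate) removes \textbf{CH}. Your added syntactic verification that the statement fits Platek's hypotheses is a welcome elaboration of a step the paper leaves implicit.
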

\qed

\begin{coro}\label{ana-fink}
If $\mathcal{H}$ is a semiselective
Gowers coideal of block sequences then every metric analytic subset of
${FIN_k^{[\infty]}}$ is ${\cal H}$--Ramsey.
\end{coro}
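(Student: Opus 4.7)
The plan is to deduce this directly from the closure of $\mathcal{R}(\mathcal{H})$ under the Suslin operation, which is the preceding corollary, together with the fact that closed sets of $FIN_k^{[\infty]}$ are already known to be $\mathcal{H}$--Ramsey.

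First I would recall the classical description of analytic subsets of a metric space: every analytic set is obtained by applying the Suslin operation to a system of closed sets indexed by $FIN_k^{[<\infty]}$ (equivalently, indexed by finite sequences from a countable set; the indexing scheme is irrelevant here since $FIN_k^{[<\infty]}$ is countable). Thus, to conclude that every analytic subset of $FIN_k^{[\infty]}$ belongs to $\mathcal{R}(\mathcal{H})$, it suffices to show that $\mathcal{R}(\mathcal{H})$ contains all closed subsets of $FIN_k^{[\infty]}$, since the previous corollary already guarantees closure of $\mathcal{R}(\mathcal{H})$ under the Suslin operation.

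Next I would observe that $\mathcal{R}(\mathcal{H})$ is closed under complements: the defining clause ``$[a,B]\subseteq\mathcal{X}$ or $[a,B]\cap\mathcal{X}=\emptyset$'' is manifestly symmetric in $\mathcal{X}$ and its complement $FIN_k^{[\infty]}\setminus\mathcal{X}$, so $\mathcal{X}\in\mathcal{R}(\mathcal{H})$ if and only if $FIN_k^{[\infty]}\setminus\mathcal{X}\in\mathcal{R}(\mathcal{H})$. Combined with Proposition \ref{sigmaideal}, this gives that $\mathcal{R}(\mathcal{H})$ is a $\sigma$-algebra. By Corollary \ref{metricosfink}, every metric open subset of $FIN_k^{[\infty]}$ is $\mathcal{H}$--Ramsey, and hence by taking complements, every closed subset of $FIN_k^{[\infty]}$ is $\mathcal{H}$--Ramsey as well.

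Putting the pieces together: let $\mathcal{X}\subseteq FIN_k^{[\infty]}$ be metric analytic. Write $\mathcal{X}=\bigcup_{A\in FIN_k^{[\infty]}}\bigcap_{n\in\mathbb{N}}\mathcal{F}_{A\upharpoonright n}$ for some family $(\mathcal{F}_a)_{a\in FIN_k^{[<\infty]}}$ of closed subsets of $FIN_k^{[\infty]}$. Each $\mathcal{F}_a$ lies in $\mathcal{R}(\mathcal{H})$ by the previous paragraph, so the result of the Suslin operation lies in $\mathcal{R}(\mathcal{H})$ by the preceding corollary. I do not anticipate any serious obstacle here, since all the nontrivial work (the abstract Galvin's lemma, the Ramsey property for open sets, the $\sigma$-algebra structure, and the closure under Suslin via the Marczewski pair / Platek argument) has already been carried out; the statement of Corollary \ref{ana-fink} is essentially a bookkeeping consequence of assembling these facts.
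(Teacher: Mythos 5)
Your proof is correct and follows exactly the route the paper intends: the corollary is stated with a bare \qed precisely because it is the assembly of Corollary \ref{metricosfink} (open, hence by complementation closed, sets are $\mathcal{H}$--Ramsey), the $\sigma$-algebra structure from Proposition \ref{sigmaideal}, and the immediately preceding corollary on closure under the Suslin operation. The only point worth tightening is your remark that the indexing is irrelevant ``since $FIN_k^{[<\infty]}$ is countable'': what one actually needs is that every finite block sequence has infinitely many immediate extensions, so that the tree $(FIN_k^{[<\infty]},\sqsubset)$ is isomorphic to $\mathbb{N}^{<\mathbb{N}}$ and the paper's version of the Suslin operation, with unions taken over $FIN_k^{[\infty]}$, still yields every analytic set from closed sets.
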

\qed

\section{Canonical partition property}\label{cpp}

On $FIN$ (i.e., $FIN_1$), consider the equivalence relations $min$, $max$, $(min,max)$,
$=$ and $FIN^2$, defined by
$$(s,t)\in min\Leftrightarrow min(a)= min(b)$$
$$(s,t)\in max\Leftrightarrow max(a) = max(b)$$
$$(s,t)\in (min,max)\Leftrightarrow min(a)= min(b) \mbox{ and }max(a)
= max(b)$$
``$=$'' and $FIN^2$ are the trivial relations. List
$$\mathcal{R}_1=\{min,max,(min,max) ,=,FIN^2\}$$

In \cite{taylor} the following was proven:

\begin{thm}[Taylor \cite{taylor}]\label{cpp-fin}
For every equivalence relation $R$ on $FIN$ there exists $A\in
FIN^{[\infty]}$ such that the restriction of $R$ to $[A]$ coincides
with one of the members of $\mathcal{R}_1$.
\end{thm}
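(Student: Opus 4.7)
The plan is to prove Taylor's theorem by iteratively applying the tuple version of Hindman's theorem (Theorem \ref{ramsey2} with $\mathcal{H}=FIN^{[\infty]}$, a trivial semiselective Gowers coideal) to stabilize $R$ on successively finer block subsequences, after which the equivalence relation axioms collapse the possibilities to exactly the five relations in $\mathcal{R}_1$.

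First, I would stabilize $R$ on disjoint pairs. The set $FIN^{[2]}$ consists of ordered pairs $(s,t)$ with $\max(supp(s))<\min(supp(t))$. Coloring $c(s,t)=1$ iff $sRt$, Theorem \ref{ramsey2} yields $A_0\in FIN^{[\infty]}$ on which $c$ is constant. If $c\equiv 1$ on $[A_0]^{[2]}$, then for any $s,t\in[A_0]$ a third $u\in[A_0]$ disjoint from both exists (since $A_0$ is infinite), so $sRu$ and $uRt$ yield $sRt$ by transitivity; hence $R\!\!\upharpoonright\!\![A_0]=FIN^2$, the last member of $\mathcal{R}_1$. Otherwise $c\equiv 0$, and I continue.

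Second, I would canonize $R$ on overlapping pairs. Writing $A_0=(a_n)_n$, any distinct pair $s,t\in[A_0]$ is coded by the labelling of the $A_0$-blocks appearing in $s\cup t$ according to whether they lie in $s\setminus t$, $t\setminus s$, or $s\cap t$. For each labelling pattern $\pi$ of fixed length $n$, the set of pairs of type $\pi$ corresponds to a $2$-coloring of $[A_0]^{[n]}$ by the $R$-value, to which Theorem \ref{ramsey2} applies. Iterating across all finitely-many patterns per $n$ and diagonalizing in $n$, using semiselectivity of $FIN^{[\infty]}$ to meet countably many dense open sets simultaneously, produces $A_1\leq A_0$ on which $sRt$ depends only on the labelling type of $(s,t)$. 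The decisive invariants are the three bits $\epsilon_{\min},\epsilon_{\max},\epsilon_{\text{both}}\in\{0,1\}$ recording $R$ on pairs sharing only the $\min$-block, only the $\max$-block, and both extremes respectively; other sharing types will be forced to $0$ by the next step.

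Finally, I would use transitivity to enumerate the consistent profiles. Any pair sharing only a non-extremal block (or an ``edge'' block that is the $\max$ of one element and the $\min$ of the other) can be linked by a short chain to a disjoint pair, forcing its $R$-value to $0$ via $c\equiv 0$. The chain $s=a_i,\ t=a_i\cup a_j,\ u=a_j$ with $i<j$ witnesses $\epsilon_{\min}\cdot\epsilon_{\max}=0$ (else $sRu$ would contradict $c\equiv 0$); a chain with two elements sharing the $\min$-block but differing in the $\max$-block shows $\epsilon_{\min}=1\Rightarrow\epsilon_{\text{both}}=1$, and symmetrically for $\epsilon_{\max}$. The four surviving profiles $(0,0,0),(0,0,1),(1,0,1),(0,1,1)$ correspond respectively to $=,(\min,\max),\min,\max$, completing $\mathcal{R}_1$. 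The main obstacle is the second step: Theorem \ref{ramsey2} only stabilizes colorings of disjoint $n$-tuples, so encoding overlapping pairs as labelled disjoint blocks and diagonalizing over all finitely-many labelling patterns per length requires careful use of semiselectivity and vigilant bookkeeping to ensure no pair's type escapes stabilization; the third step is elementary but needs precise chain constructions to guarantee the required endpoint disjointness.
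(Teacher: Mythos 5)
The paper does not actually prove this statement: it is quoted from Taylor \cite{taylor} with a bare \qed, so your proposal has to be judged on its own merits. The overall shape---homogenize colorings via the Milliken-type Theorem \ref{ramsey2} and then let transitivity collapse the surviving possibilities to the five relations in $\mathcal{R}_1$---is the right one, and your Step 1 and the profile analysis sketched in Step 3 are essentially sound.

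The genuine gap is Step 2. You propose to make $sRt$ depend only on the labelling pattern of $(s,t)$ by homogenizing one coloring per pattern and then ``diagonalizing in $n$.'' There are countably many patterns, and a diagonalization $B$ of a decreasing sequence $(A_n)_n$ only guarantees $[b,B]\subseteq[b,A_n]$ for $b\in[B]$ with $n=max(supp(b))$; what you actually obtain is that each pattern-coloring is constant on the tuples of $[B]$ supported beyond a threshold depending on the pattern, i.e.\ only \emph{eventual} homogeneity, not constancy on all of $[B]^{[n]}$. This cannot in general be upgraded: full simultaneous homogenization of countably many colorings on a single block subsequence is false for arbitrary colorings (for instance $c_j(s)=1$ iff $min(supp(s))>j$, a family of colorings of $[A]^{[1]}$ admitting no common homogeneous $B$), so any argument achieving it for the pattern-colorings must exploit the equivalence-relation structure \emph{before} the homogenization, not after it, and your ``vigilant bookkeeping'' will not close this. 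The standard repair---and what the known proofs of Taylor's theorem do---is to homogenize only a fixed \emph{finite} list of colorings of $[A]^{[n]}$ for $n\leq 4$, such as $(s,t)\mapsto [sR(s\cup t)]$, $(s,t,u)\mapsto[(s\cup t)R(s\cup u)]$, $(s,t,u)\mapsto[(s\cup u)R(t\cup u)]$, $(s,t,u,v)\mapsto[(s\cup t\cup v)R(s\cup u\cup v)]$, and a few more. The point your outline misses is that Theorem \ref{ramsey2} colors tuples of arbitrary members of $[A]$, not just single blocks, so one coloring of $[A]^{[3]}$ already covers, say, all pairs sharing exactly their minimal block, of every length and composition; the remaining patterns are then reached by transitivity chains through far-out auxiliary elements, exactly in the spirit of your Step 3. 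With that restructuring the argument goes through; as written, Step 2 asserts a reduction that the cited tools do not deliver.
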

\qed

The members of $R_1$ are known as \emph{canonical relations} on $FIN$.

\medskip

For $k > 1$ the corresponding list of canonical equivalence relations is longer. To give the list we need some definitions, taken from \cite{jordi}.

\begin{defn}
For $i\leq k$ define the maps $min_i$, $max_i\colon FIN_k\to \mathbb{N}$
by $min_i(a)=$min $\{n\colon a(n)=i\}$ and 0 if $i\not \in rank(a)$,
$max_i(a)=$max$\{n\colon a(n)=i\}$ and 0 if $i\not \in rank(a)$.
We say that $a\in FIN_k$ is a \textbf{system of staircase} (\textbf{sos} in short)
if it satisfies
\begin{itemize}
\item[a)] $rank(a)=\{0,1,\dots,k\}$
\item[b)] $min_i(a)<min_j(a)<max_j(a)<min_j(a)$, for $i<j\leq k$.
\item[c)] For every $1\leq i\leq k$
$$rank(a\upharpoonright[min_{i-1}(a),min_i(a)])=\{0,1,\dots,i\}$$
$$rank(a\upharpoonright[max_i(a),max_{i-1}(a)])=\{0,1,\dots,i\}$$
$$rank(a\upharpoonright[min_k(a),max_k(a)])=\{0,1,\dots,k\}$$
\end{itemize}
We say that $A=(a_1,a_2,\dots)\in FIN_k^{[\infty]}$ is a sos if every $a_j$ is a sos.
\end{defn}

\medskip

\begin{defn}
Let $R$ be an equivalence relation on $FIN_k$. Given $A\in FIN_k^{[\infty]}$, we say that $R$ is \textbf{canonical in }
$[A]$ if for every sos $B\leq A$ one of the following holds:
\begin{itemize}
\item $\forall a$, $b\in [B]$ $(a,b)\in R$ or
\item $\forall a$, $b\in [B]$ $(a,b)\not\in R$.
\end{itemize}
If $R$ is canonical in $FIN_k$ we say that $R$ is \textbf{canonical}.
\end{defn}

Define the equivalence relations $min_k$, $max_k$, $(min,max)_k$ on $FIN_k$ by
$$(a,b)\in min_k\Leftrightarrow min_k(a)=min_k(b)$$
$$(a,b)\in max_k\Leftrightarrow max_k(a)=max_k(b)$$
$$(min,max)_k = min_k\cap max_k$$
Analogously, define the relations $min_i$, $max_i$ and $(min,max)_i$, for
$1\leq i\leq k-1$. All of these
are examples of canonical relations. In \cite{jordi}, it has been proven
the following generalization of Theorem \ref{cpp-fin}:

\begin{thm}[L\'opez--Abad \cite{jordi}]\label{cpp-fink}
There exists a finite collection
$$\mathcal{R}_k=\{R_1,R_2,\dots,R_{t_k}\}$$ of canonical equivalence
relations on $FIN_k$ such that for every equivalence relation $R$ on
$FIN_k$ there exist $m\in\{1,2,\dots,t_k\}$ and an sos $A\in
FIN_k^{[\infty]}$ such that the restriction of $R$ to $[A]$ coincides with $R_m$.
\end{thm}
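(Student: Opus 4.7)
The plan is to prove this canonical partition theorem by iterating Gowers' theorem (Theorem \ref{gowers}) applied to the characteristic function of $R$ on pairs, and then matching the resulting stabilized relation against a prearranged finite list of candidates. This mirrors the Erd\H{o}s--Rado style of proof of the classical Taylor theorem (Theorem \ref{cpp-fin}), but performed at every level $i \leq k$ simultaneously.

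First, I would define $\mathcal{R}_k$ explicitly as the family of equivalence relations of the form $(a,b) \in R_\Sigma$ iff $\sigma(a) = \sigma(b)$ for every $\sigma$ in some subfamily $\Sigma$ of the atomic invariants $\{\min_i, \max_i : 0 \leq i \leq k\}$, subject to the consistency constraints imposed by the sos structure (so that each $R_\Sigma$ is actually realizable). The trivial relation $FIN_k^2$ corresponds to $\Sigma = \emptyset$, and the identity corresponds to a maximal $\Sigma$. Verify that $\mathcal{R}_k$ is finite and that each $R_\Sigma$ is canonical on $FIN_k$: on any sos $B \leq A$, the values of $\min_i, \max_i$ read off the generators of $B$ in a rigid way, so coincidence across members of $[B]$ is either forced or forbidden uniformly.

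Second, given an arbitrary equivalence relation $R$ on $FIN_k$, I would apply a pairs-version of Gowers' theorem---derived by iterating Theorem \ref{gowers}, in the same spirit as the proof of Theorem \ref{ramsey2}---to the finite coloring of ordered disjoint pairs $(a,b)$ with $a < b$ that assigns to each pair the value of $\chi_R(a,b)$ together with the finitely many Boolean data ``does $\min_i(a) = \min_j(b)$?'' and ``does $\max_i(a) = \max_j(b)$?'' for $i,j \leq k$. This produces an sos $A_1$ on which, for each abstract ``type'' $t$ encoding the above Boolean data, either every pair of type $t$ in $[A_1]$ lies in $R$ or none does. Handle non-disjoint pairs (those obtained by tetris-compressing a single generator) and the reflexive case by finitely many further refinements of the same kind. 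Finally, a last application of Gowers' theorem collapses the dependence on types to dependence on a single subfamily $\Sigma$, matching $R \cap [B]^2$ with some $R_\Sigma \in \mathcal{R}_k$ on an sos $B \leq A_1$.

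The main obstacle is the pairs-Ramsey step together with the bookkeeping of canonical forms: Gowers' theorem provides stabilization for one-variable colorings only, so the pigeonhole on pairs must be built by induction, and the ``types'' to be stabilized interact nontrivially with the tetris operation (which can align $\min_i(a)$ with $\min_j(b)$ for $i \neq j$). Unlike the case $k=1$ where only five canonical relations arise, here the simultaneous presence of all levels $0,1,\dots,k$ forces a richer list $\mathcal{R}_k$, and the delicate combinatorial point is verifying closure: that no equivalence relation outside $\mathcal{R}_k$ survives the stabilization, equivalently, that any consistent intersection of atomic invariants already lies in the prescribed list.
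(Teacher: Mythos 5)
First, note that the paper does not actually prove this statement: it is quoted from L\'opez--Abad \cite{jordi} and stated with a \qed, so the only comparison available is with the cited source. Your overall strategy --- an Erd\H{o}s--Rado style canonization in which one stabilizes the pattern of $R$ on pairs via a pigeonhole for pairs obtained by iterating Theorem \ref{gowers}, and then matches the surviving pattern against a prescribed finite list --- is the right general shape, and the difficulties you flag (the pair-Ramsey step and the interaction of the tetris operation with the invariants) are the real ones. But the proposal has a concrete gap that is not mere bookkeeping.

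The list $\mathcal{R}_k$ you define, namely the relations $R_\Sigma$ given by coincidence of a subfamily $\Sigma$ of the atomic invariants $\{min_i,max_i\}$, is not the correct canonical list, so the ``closure'' step you defer would fail. Already for $k=1$ the identity relation $=$ belongs to Taylor's list $\mathcal{R}_1$ but is not of the form $R_\Sigma$: for any infinite block sequence $A=(a_n)_n$ in $FIN$, the sets $a_1\cup a_3$ and $a_1\cup a_2\cup a_3$ have the same $min$ and the same $max$ but are distinct, so $=$ is strictly finer than $(min,max)$ on every $[A]$; this contradicts your claim that the identity ``corresponds to a maximal $\Sigma$''. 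For $k>1$ the discrepancy is worse: the paper records $t_k=(k!e_k(1))^2+k\bigl(k!e_k(1)-(k-1)!e_{k-1}(1)\bigr)^2$ (so $t_1=5$), which is not the number of admissible subfamilies of atomic invariants; L\'opez--Abad's canonical relations are indexed by pairs of arrangements (injective sequences of levels) and are built from compositions of location functions along the staircase structure, not from the $min_i,max_i$ alone. Consequently there exist equivalence relations whose stabilized form lies outside your $\mathcal{R}_k$, and the final matching step has nothing to match against. A complete proof must (i) produce the correct list, and (ii) exploit transitivity of $R$, not only the pair-stabilization, to exclude spurious stabilized patterns --- for instance, patterns relating $max_i(a)$ to $min_j(b)$ survive a naive pair-coloring and are eliminated only by transitivity. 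This is precisely the content of the long argument in \cite{jordi}, and it is the part your sketch leaves open.
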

\qed

Theorem \ref{cpp-fink} has the following \emph{relativized} version:

\begin{thm}[L\'opez--Abad \cite{jordi}]\label{rcpp-fink}
There exists a finite collection
$$\mathcal{R}_k=\{R_1,R_2,\dots,R_{t_k}\}$$ of canonical equivalence
relations on $FIN_k$ such that for every
$A\in FIN_k^{[\infty]}$ and every equivalence relation $R$ on $[A]$
there exist $m\in\{1,2,\dots,t_k\}$ and an sos $B\leq A$ such that
the restriction of $R$ to $[B]$ coincides with $R_m$.
\end{thm}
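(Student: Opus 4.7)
The plan is to reduce Theorem \ref{rcpp-fink} to the absolute version (Theorem \ref{cpp-fink}) via the canonical isomorphism $\Phi : FIN_k \to [A]$ extending $e_n \mapsto a_n$, in the same spirit as the derivation of Theorem \ref{gowers1} from Theorem \ref{gowers}.

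As a preliminary step, I would replace $A$ by an sos refinement $A_0 \leq A$. Such a refinement always exists: by grouping sufficiently many consecutive terms of $A$ and applying appropriate tetris operations, one can build a block sequence in $[A]$ whose terms all satisfy the staircase conditions. So without loss of generality, one may assume $A$ itself is sos.

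Given the equivalence relation $R$ on $[A]$, define its pullback $\widetilde{R}$ on $FIN_k$ by $(p,q) \in \widetilde{R} \iff (\Phi(p), \Phi(q)) \in R$. Applying Theorem \ref{cpp-fink} to $\widetilde{R}$ yields $m \in \{1,\dots,t_k\}$ and an sos $C = (c_n)_n \in FIN_k^{[\infty]}$ such that $\widetilde{R}\!\restriction\![C]$ coincides with $R_m\!\restriction\![C]$. I would then set $B = (\Phi(c_n))_n$ and verify: first, $B$ is a basic block sequence with $B \leq A$ and $[B] = \Phi([C])$, which is immediate from the definition of $\Phi$; second, $B$ is sos, a point that uses crucially the sos assumptions on both $A$ and $C$ (each $a_n$ takes every value in $\{0,1,\dots,k\}$ with staircase ordering, and combining this with the sos structure of $c_n$ one checks by direct computation that $\Phi(c_n)$ inherits a staircase); and third, $R\!\restriction\![B]$ coincides with some canonical relation $R_{m'} \in \mathcal{R}_k$.

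The main obstacle will be the third verification. A direct computation shows that, for instance, $\min_i(\Phi(p))$ equals $\min_{i+k-p(n^*)}(a_{n^*})$, where $n^*$ is the least index of the support of $p$ with $p(n^*) \geq i$. Consequently, $\Phi$ does not preserve individual canonical relations but rather permutes them, sending each $R_m$ on $FIN_k$ to some other relation on $[A]$; the sos hypothesis on $A$ is exactly what is needed to guarantee that this image still lies in $\mathcal{R}_k$. The technical heart of the proof lies in making the correspondence $m \mapsto m'$ explicit by case analysis on the canonical relations in $\mathcal{R}_k$, and then checking closure of the list $\mathcal{R}_k$ under this operation. Once that is established, step three follows and the theorem is complete.
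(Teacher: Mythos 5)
First, a point of reference: the paper does not prove this statement at all --- it is quoted from L\'opez--Abad \cite{jordi} and stated with an immediate end-of-proof mark, so there is no in-paper argument to compare yours against. Judged on its own terms, your reduction has a genuine gap, and you have in fact located it yourself. For Theorem \ref{gowers1} the transfer along $\Phi$ is costless because the conclusion ``$f$ is constant on $[B]$'' is invariant under any bijection of $[C]$ onto $[B]$. Here the conclusion is that $R\upharpoonright[B]$ belongs to the \emph{fixed finite list} $\mathcal{R}_k$, and your own computation ($\min_i(\Phi(p))=\min_{i+k-p(n^*)}(a_{n^*})$ with $n^*$ the first support point of $p$ carrying a value $\geq i$) shows that $\Phi_*$ does not fix the canonical relations: it sends $\min_i$, for $i<k$, to the relation ``the first support point with value $\geq i$ agrees, and so does the value there,'' which is not of the form $\min_j$. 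Hence the entire content of the relativized theorem is the claim that $\mathcal{R}_k$ is closed, up to restriction, under $R_m\mapsto\Phi_*(R_m)$ --- and your proposal explicitly defers exactly this ``technical heart.'' Since neither the paper nor your proposal reproduces the actual catalogue $\mathcal{R}_k$ from \cite{jordi} (only its cardinality $t_k$ is given), that closure claim is not a formality one can wave at; it is a substantive combinatorial assertion about a specific list of $t_k$ relations, and without it the argument does not close.

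Two secondary gaps: the existence of an sos refinement $A_0\leq A$ of an arbitrary block sequence, and the assertion that $\Phi$ carries sos block sequences to sos block sequences, are each given only a one-line sketch, and both depend on the precise staircase conditions (which, as printed in the paper, contain typographical errors and cannot be checked literally). A more economical route to a genuine proof is to note that L\'opez--Abad's argument is already local: it produces the desired sos inside any prescribed $[A]$, which \emph{is} the relativized statement, rather than trying to bootstrap it from the unrelativized one through $\Phi$. As written, your proposal is a plausible plan of attack, not a proof.
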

\qed

$t_k$ is given by the following:
$$t_k=(k!e_k(1))^2+k(k!e_k(1)-(k-1)!e_{k-1}(1))^2$$
where, for every $n$, $e_n(1)=\sum_{j=0}^n\frac{1}{j!}$.

\medskip

Now, fix $\mathcal{R}_k=\{R_1,R_2,\dots,R_{t_k}\}$ as in Theorem \ref{rcpp-fink}.

\begin{defn}\label{def-cpp}
A coideal $\mathcal{H}$ on $FIN_k$ is said to have
the \textbf{canonical partition property} if for every equivalence
relation $R$ on $FIN_k$ there exist $m\in\{1,2,\dots,t_k\}$ and
$A\in \mathcal{H}$ such that the restriction of $R$ to $[A]$
coincides with $R_m$.
\end{defn}

We conclude with the following

\begin{thm}\label{semisel-cpp}
If $\mathcal{H}$ is a semiselective Gowers coideal of block sequences
then it has the canonical partition property.
\end{thm}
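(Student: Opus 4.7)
The approach is to derive the canonical partition property from L\'opez--Abad's relativized Theorem \ref{rcpp-fink} together with the fact, established in Corollary \ref{ana-fink}, that every analytic subset of $FIN_k^{[\infty]}$ is $\mathcal{H}$--Ramsey whenever $\mathcal{H}$ is a semiselective Gowers coideal.

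Fix an equivalence relation $R$ on $FIN_k$. For each $m\in\{1,\dots,t_k\}$ I would set
$$\mathcal{X}_m = \{A\in FIN_k^{[\infty]} : R\cap([A]\times[A]) = R_m\cap([A]\times[A])\},$$
and first verify that every $\mathcal{X}_m$ is a closed (hence analytic) subset of $FIN_k^{[\infty]}$ in the product topology. The key observation is that for any fixed pair $(a,b)\in FIN_k\times FIN_k$, the event ``$a\in[A]$ and $b\in[A]$'' depends on only finitely many entries of $A$: since the elements of a block sequence have increasing supports, the only entries that can participate in a tetris combination producing $a$ or $b$ are those whose support meets $[0,\max(supp(a)\cup supp(b))]$. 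Hence $\mathcal{X}_m$ is the intersection, over the countably many pairs $(a,b)$ on which $R$ and $R_m$ disagree, of clopen constraints.

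By Corollary \ref{ana-fink} each $\mathcal{X}_m$ is $\mathcal{H}$--Ramsey, and by Proposition \ref{sigmaideal} so is the finite union $\mathcal{X}=\bigcup_{m=1}^{t_k}\mathcal{X}_m$. Picking any $A_0\in\mathcal{H}$ and applying the $\mathcal{H}$--Ramsey dichotomy to $\mathcal{X}$ at $[\emptyset,A_0]$ yields $B\in[\emptyset,A_0]\cap\mathcal{H}$ with either $[\emptyset,B]\subseteq\mathcal{X}$ or $[\emptyset,B]\cap\mathcal{X}=\emptyset$. To rule out the second alternative, I would invoke Theorem \ref{rcpp-fink} applied to $B$ and $R$: it produces some index $m$ and some sos $C\leq B$ with $R|_{[C]}=R_m|_{[C]}$, so $C\in\mathcal{X}_m\subseteq\mathcal{X}$ while $C\in[\emptyset,B]$. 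Hence $[\emptyset,B]\subseteq\mathcal{X}$; in particular $B$ itself belongs to some $\mathcal{X}_{m'}$, which gives $R|_{[B]}=R_{m'}|_{[B]}$ with $B\in\mathcal{H}$, precisely what Definition \ref{def-cpp} requires.

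The main obstacle is the first step, namely the topological verification that each $\mathcal{X}_m$ is Borel; once that is in hand the rest is an assembly of previously established structural results. A small point worth flagging is that Definition \ref{def-cpp} does not require the witnessing $A$ to be an sos, so no additional preservation argument is needed at the end, even though Theorem \ref{rcpp-fink} is phrased with sos witnesses.
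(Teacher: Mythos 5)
Your proposal is correct and follows essentially the same route as the paper: define the set of $A$ with $R\upharpoonright[A]$ canonical, check it is closed in the product topology, apply Corollary \ref{ana-fink} to get the $\mathcal{H}$--Ramsey dichotomy, and rule out the negative alternative with Theorem \ref{rcpp-fink}. Your only deviation is cosmetic --- splitting $\mathcal{X}$ into the finitely many pieces $\mathcal{X}_m$ and verifying closedness coordinatewise rather than by the paper's sequential argument --- and both verifications are sound.
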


\begin{proof}
Let $R$ be an equivalence relation on $FIN_k$, $\mathcal{R}_k$ as in
Theorem \ref{cpp-fink} and let $$\mathcal{X}=\{A\in FIN_k^{[\infty]}\colon
\exists m\leq t_k\ (R\upharpoonright [A]=R_m)\}$$

Then $\mathcal{X}$ is nonempty by Theorem \ref{cpp-fink}. Furthermore,
$\mathcal{X}$ is closed. In fact, let $B=(b_1,b_2,\dots)$ be in the
closure of $\mathcal{X}$. For every $n$, there exists $A_n\in\mathcal{X}$
in the basic neighborhood $[(b_1,b_2,\dots,b_n)]$. Consider $(A_{n_j})_j$ a
subsequence $(A_{n_j})_j\subseteq(A_n)_n$ such that for every $j$ the
restriction of $R$ to $[A_{n_j}]$ coincides with,
say, $R_m$. To see that $B\in\mathcal{X}$, fix $a$, $b\in[B]$ and $l$
large enough so that $n_l\geq max \{max(supp(a)),max(supp(b))\}$ then
both $a$ and $b$ are member of
$[A_{n_j}]$ for $j\geq l$. Therefore $a\ R \ b\Leftrightarrow a\ R_m\ b$.
This proves that $B\in\mathcal{X}$. By corollary \ref{ana-fink} $\mathcal{X}$
is $\mathcal{H}$--Ramsey.
Consider $A\in \mathcal{H}$ such that $[\emptyset,A]\subseteq
\mathcal{X}$ or $[\emptyset,A]\cap\mathcal{X}=\emptyset$. By theorem
\ref{rcpp-fink}, $[\emptyset,A]\cap\mathcal{X}\not=\emptyset$. Hence
$A\in\mathcal{X}$.
\end{proof}

\section{Final comments}

\subparagraph*{On the stability of Lipschitz functions on $S(c_0)$.} In \cite{Gow}, Gowers used Theorem \ref{gowers} above to prove the following:

\begin{thm}[Gowers; Theorem 6 in \cite{Gow}]\label{Lips}
Let $F : S(c_0) \rightarrow \mathbb{R}$ be an unconditional Lipschitz function. For every real number $\epsilon > 0$, there exists an infinite-dimensional positive block subspace $X$ of $c_0$ such that $sup\{|F(x) - F(y)| : x, y \in S(X)\} < \epsilon$.
\end{thm}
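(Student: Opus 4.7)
The plan is to reduce the Lipschitz stability statement to a finite coloring of $FIN_k$, apply Theorem \ref{gowers}, and recover the continuous conclusion using the Lipschitz bound together with the unconditionality of $F$.

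Let $L$ denote the Lipschitz constant of $F$. Since $S(c_0)$ has diameter $2$, the image $F(S(c_0))$ lies in some interval $J$ of length at most $2L$; partition $J$ into $r := \lceil 6L/\epsilon\rceil$ sub-intervals $I_0,\ldots,I_{r-1}$, each of length less than $\epsilon/3$. Choose an integer $k$ large enough that $k > 3L/\epsilon$. Identify every $p \in FIN_k$ with the vector $p/k \in S(c_0)^+$ (well-defined since $k\in rang(p)$ forces $\|p/k\|_\infty = 1$), and define the coloring $f \colon FIN_k \to \{0,1,\ldots,r-1\}$ by $f(p) = j$ iff $F(p/k)\in I_j$. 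Theorem \ref{gowers} supplies a block sequence $A = (a_n)_n \in FIN_k^{[\infty]}$ and an index $j^*$ with $F(p/k) \in I_{j^*}$ for every $p \in [A]$. Let $X$ be the closed linear span of $\{a_n/k : n\in\mathbb{N}\}$; by construction $X$ is an infinite-dimensional positive block subspace of $c_0$.

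It then suffices to verify $\sup\{|F(x) - F(y)| : x,y \in S(X)\} < \epsilon$. Because $F$ is unconditional and $X$ is generated by a disjointly-supported positive block sequence (so that $|x|\in S(X)^+$ whenever $x\in S(X)$), we may restrict to $x,y\in S(X)^+$. The core step is then the following approximation claim: every $x\in S(X)^+$ admits some $p\in[A]$ with $\|x - p/k\|_\infty \leq \epsilon/(3L)$. Granting this, pick such a $p$ for $x$ and $q$ for $y$; the Lipschitz bound gives $|F(x) - F(p/k)| \leq \epsilon/3$ and similarly for $y$, while $|F(p/k) - F(q/k)| < \epsilon/3$ because both values lie in $I_{j^*}$, and the triangle inequality yields $|F(x) - F(y)| < \epsilon$.

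The main obstacle is the approximation claim. Writing $x = \sum_n \lambda_n\, a_n/k$ with $\lambda_n\in[0,1]$ and $\max_n \lambda_n = 1$, the natural candidate is $p = \sum_n T^{(j_n)}(a_n)$, where $j_n$ is chosen so that $(k - j_n)/k \approx \lambda_n$ and $\min_n j_n = 0$ (the latter ensuring $k \in rang(p)$). On coordinates $i$ where $a_n(i) = k$ this approximates $\lambda_n a_n(i)/k$ to within $1/k$, but because the tetris operation clips $a_n(i) - j_n$ to $0$ whenever $a_n(i) < j_n$, positions with $a_n(i) \notin \{0,k\}$ can contribute a sup-norm error not controlled by $1/k$. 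The standard remedy is to preprocess $A$ so that each $a_n$ is essentially $0/k$-valued (equivalently, a scalar multiple of the indicator of its support), for which tetris shifts and scalar multiplications agree; this can be arranged either by refining $A$ using Theorem \ref{gowers1} with an auxiliary coloring that isolates such blocks, or by first passing through an sos-type refinement as in Section \ref{cpp}. With that refinement in hand, the approximation bound is immediate and the proof is complete.
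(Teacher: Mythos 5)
There is a genuine gap, and you have in fact put your finger on it yourself: the approximation claim fails for the linear correspondence $p \mapsto p/k$, and the ``standard remedy'' you propose cannot be carried out. The obstruction is that being $\{0,k\}$-valued is destroyed by the very operations that Gowers' theorem quantifies over: for $k\geq 2$ and any $A=(a_n)_n$, the set $[A]$ contains elements such as $T^{(1)}(a_0)+a_1$, which takes the value $k-1\notin\{0,k\}$ at every coordinate where $a_0$ equals $k$ (and such a coordinate exists by definition of $FIN_k$). Consequently, if you apply Theorem \ref{gowers1} to the auxiliary coloring ``is/is not $\{0,k\}$-valued,'' the homogeneous side is forced to be the \emph{bad} one: you obtain $B$ such that \emph{no} element of $[B]$ is $\{0,k\}$-valued, hence no $C\leq B$ has $\{0,k\}$-valued blocks. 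Passing to an sos refinement as in Section \ref{cpp} makes things worse, since an sos element is required to have $rank(a)=\{0,1,\dots,k\}$, i.e.\ to take \emph{all} intermediate values. So there is no way to secure simultaneously ``$f$ constant on $[A]$'' and ``each $a_n$ is a multiple of an indicator,'' and without that your candidate $p=\sum_n T^{(j_n)}(a_n)$ incurs an uncontrolled sup-norm error at coordinates where $a_n(i)\notin\{0,k\}$, exactly as you observed.

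The missing idea --- which is what the paper sketches in its final section, following Gowers --- is to replace the arithmetic discretization $\{0,1/k,\dots,1\}$ by the \emph{geometric} $\delta$-net $\Delta_k$ of functions with values in $\{1,(1+\delta)^{-1},\dots,(1+\delta)^{-(k-1)}\}$, with $\delta=\epsilon/2$ and $k$ chosen so that $(1+\delta)^{-(k-1)}<\delta$, transported to $FIN_k$ by $\Theta(h)(n)=k+\log_{1+\delta}(h(n))$. Under this correspondence the tetris operation $T$ becomes multiplication by $(1+\delta)^{-1}$ followed by truncation of the coordinates that fall below the bottom level $(1+\delta)^{-(k-1)}<\delta$; since those coordinates are already smaller than $\delta$, the truncation costs at most $\delta$ in sup norm. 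Hence $\{\Theta^{-1}(q):q\in[A]\}$ \emph{is} a $\delta$-net of the positive sphere of the block subspace generated by $\{\Theta^{-1}(a_n)\}$ for an arbitrary homogeneous $A$, with no preprocessing of the blocks required. Your outer scaffolding (reduce to $S(X)^+$ by unconditionality, color by which subinterval $F$ lands in, conclude by the triangle inequality and the Lipschitz bound) is fine and matches the intended argument; it is only the net, and hence the approximation step, that must be multiplicative rather than additive.
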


A function satisfying the conclusion of Theorem \ref{Lips} is sometimes called $\epsilon$-\textit{stable}.

\medskip

Given $\epsilon > 0$, let $\delta = \epsilon/2$ and choose an integer $k$ such that  $1/(1+\delta)^{k-1} < \delta$. Let $\Delta_k$ be the collection of functions $h : \mathbb{N} \rightarrow \{1, 1/(1+\delta), \dots, 1/(1+\delta)^{k-1}\}$ which are finitely supported and such that $h(n) = 1$ for some $n$. Then $\Delta_k$ is a $\delta$-net of $PS(c_0)$. There exists a bijective correspondence $\Theta : \Delta_k \rightarrow FIN_k$ defined by $\Theta(h)(n) = k+log_{\delta+1}(h(n))$, if $h(n) \neq 0$;  and $\Theta(h)(n) = 0$, otherwise. 

\medskip

Given an unconditional Lipschitz function $F : S(c_0) \rightarrow \mathbb{R}$ (with Lipschtiz constant equal to 1, without loss of generality), it is possible to use $F$ to define a suitable finite coloring of $\Delta_k$ (see the proof of Theorem 6 in \cite{Gow} for more details) and then use the bijection $\Theta$ to induce a finite coloring $f$  of $FIN_k$ in such a way that any $A\in FIN_k^{[\infty]}$ given by Theorem \ref{gowers} for which $f$ is constant on $[A]$ corresponds to a block basis of $c_0$ whose generated subspace $X$ satisfies the conclusion of Theorem \ref{Lips}.

\medskip

Now, given a  Gowers coideal $\mathcal{H}\subseteq FIN_k^{[\infty]}$, since by definition it satisfies a local version of Theorem \ref{gowers}, it would be interesting to  understand the nature of the family of block bases of $c_0$ (or the family of subspaces of $c_0$) which correspond to $\mathcal{H}$ via the bijection $\Theta$. This could be a means to find more examples of Gowers coideals of block sequences. On the other hand, this could possibly open a highway to transfer the notions of semiselectivity, local Ramseyness, etc, to families of subspaces of $c_0$ and study the relation of such families with the stability of Lipschitz funtions.

\medskip

\subparagraph*{Semiselective Gowers coideals versus stable ordered-union ultrafilters.} 
In \cite{blass}, Blass introduced the \textit{stable ordered-union ultrafilters} on $FIN$ and proved that they satisfy local versions of Hindman's theorem \cite{hindman} and Ramsey's theorem for unions \cite{mill}, the canonical partion property related to Taylor's theorem \cite{taylor} and the infinitary partition property related to a theorem of Milliken's \cite{mill} which states that analytic subsets of $FIN^{[\infty]}$ are Ramsey. On the other hand, for $k = 1$, Theorems \ref{ramsey2} and \ref{semisel-cpp}, and Corollary \ref{ana-fink} in this paper show that semiselective Gowers coideals of block sequences also  satisfy all these properties, besides a local version of a Galvin's theorem for unions (Theorem \ref{galvinlocal} above). Nevertheless, the existence of stable ordered-union ultlafilters cannot be deduced from ZFC alone (see \cite{blass} and \cite{blasshind}), but in this paper we have given examples of semiselective Gowers coideals of block sequences in ZFC.

\bigskip

Finally,

\subparagraph*{Concerning the study of the local Ramsey property in the context of the theory Ramsey spaces.} The results presented in this paper give us a hint on the conditions to be imposed on a family $\mathcal{H}$ of elements of a topological Ramsey space $\mathcal{R}$, in order to obtain a local version (with respect to $\mathcal{H}$) of the \textit{abstract Ellentuck theorem} (see \cite{todo}). Obviously, Theorem \ref{baire-ramsey} above suggests that, besides a property corresponding to semiselectivity, $\mathcal{H}$ must satisfy a local version of the \textit{pigeon hole principle} satisfied by $\mathcal{R}$ (in the case of the topological Ramsey space $FIN_k^{[\infty]}$ the pigeon hole principle considered is precisely Gowers' theorem \cite{Gow} -- Theorem \ref{gowers} above). We refer the reader to \cite{mij2} for partial results on an abstract study of the local Ramsey property.

\bigskip

\begin{flushleft}
\textbf{Acknowledgement.} \\
The authors would like to express their gratitude to Elias Tahhan for his kind revision of a previous draft of this paper and his useful suggestions to improve its presentation, and to Carlos Uzc\'ategui and Carlos Di Prisco for valuable dicussions on the subject matter treated in this work. 
\end{flushleft}

\end{document}